\newcommand{\R}{\mathbb{R}}
\newcommand{\bigo}{\mathcal{O}}
\newcommand{\N}{\mathcal{N}}
\newcommand{\D}{\mathcal{D}}
\newcommand{\X}{\mathcal{X}}
\newcommand{\sgn}{\text{sgn}}
\newcommand{\rank}{\text{rank}}
\DeclarePairedDelimiterX{\norm}[1]{\lVert}{\rVert}{#1}
\DeclarePairedDelimiterX{\abs}[1]{\lvert}{\rvert}{#1}
\DeclarePairedDelimiterX{\0norm}[1]{\lVert}{\rVert_{0}}{#1}
\DeclarePairedDelimiterX{\1norm}[1]{\lVert}{\rVert_{1}}{#1}
\DeclarePairedDelimiterX{\2norm}[1]{\lVert}{\rVert_{2}}{#1}
\DeclarePairedDelimiterX{\nnorm}[1]{\lVert}{\rVert_{n}}{#1}
\DeclarePairedDelimiterX{\2nnorm}[1]{\lVert}{\rVert_{n}^2}{#1}
\newtheorem{definition}{Definition}[section]
\newtheorem{lemma}[definition]{Lemma}
\newtheorem{theorem}[definition]{Theorem}
\newtheorem{Coro}[definition]{Corollary}
\theoremstyle{definition} 
\newtheorem*{note}{Note}
\newtheorem*{remark}{Remark}
\begin{document}
\Sconcordance{concordance:TVAS.tex:TVAS.Rnw:%
1 84 1 1 0 51 1}
\Sconcordance{concordance:TVAS.tex:./Section1.Rnw:ofs 137:%
1 119 1}
\Sconcordance{concordance:TVAS.tex:./Section2.Rnw:ofs 257:%
1 69 1}
\Sconcordance{concordance:TVAS.tex:./Section3.Rnw:ofs 327:%
1 181 1}
\Sconcordance{concordance:TVAS.tex:./Section4.Rnw:ofs 509:%
1 64 1}
\Sconcordance{concordance:TVAS.tex:./Section5.Rnw:ofs 574:%
1 140 1}
\Sconcordance{concordance:TVAS.tex:./Section6.Rnw:ofs 715:%
1 76 1}
\Sconcordance{concordance:TVAS.tex:TVAS.Rnw:ofs 792:%
143 10 1}
\Sconcordance{concordance:TVAS.tex:./AppendixA.Rnw:ofs 803:%
1 94 1}
\Sconcordance{concordance:TVAS.tex:./AppendixB.Rnw:ofs 898:%
1 68 1}
\Sconcordance{concordance:TVAS.tex:./AppendixC.Rnw:ofs 967:%
1 29 1}
\Sconcordance{concordance:TVAS.tex:./AppendixD.Rnw:ofs 997:%
1 451 1 1 13 1 2 76 1 1 8 1 2 147 1 1 8 1 2 64 1}
\Sconcordance{concordance:TVAS.tex:TVAS.Rnw:ofs 1742:%
158 12 1}

\begin{frontmatter}
\title{Synthesis and analysis in total variation regularization}
\runtitle{Synthesis and analysis in total variation regularization}


\author{\fnms{Francesco} \snm{Ortelli}\corref{}\ead[label=e1]{francesco.ortelli@stat.math.ethz.ch}}\and \author{\fnms{Sara} \snm{van de Geer}\ead[label=e2]{geer@stat.math.ethz.ch}}
\address{R\"{a}mistrasse 101\\ 8092 Z\"{u}rich\\ \printead{e1}; \printead{e2}}

\affiliation{Seminar for Statistics, ETH Z\"{u}rich}

\runauthor{Ortelli, van de Geer}

\begin{abstract}
We generalize the bridge between analysis and synthesis estimators by \cite{elad07} to rank deficient cases. This is a starting point for the study of the connection between analysis and synthesis for total variation regularized estimators. In particular, the case of first order total variation regularized estimators over general graphs and their synthesis form are studied.

We give a definition of the discrete graph derivative operator based on the notion of line graph and provide examples of the synthesis form of $k^{\text{th}}$ order total variation regularized estimators over a range of graphs.
\end{abstract}


\begin{keyword}
\kwd{Total variation regularization}
\kwd{Lasso}
\kwd{Cycle graph}
\kwd{Analysis}
\kwd{Synthesis}
\kwd{Dictionary}
\kwd{Sparsity}
\kwd{Trend filtering}
\kwd{Symmetry}
\end{keyword}

\end{frontmatter}
\tableofcontents

\section{Introduction}\label{s1}

\subsection{Analysis and synthesis}
In the literature we encounter two main approaches to regularized empirical risk minimization, the analysis  and the synthesis approach.

Assume a model with Gaussian noise, i.e. $Y=f^0+\epsilon, \epsilon \sim \N_n(0, \sigma^2\text{I}_n), f^0\in \R^n$, and let $\D\in \R^{m \times n}$ denote a generic analyzing operator. For a vector $v \in \R^n$ we write $\norm{v}^2_n= \norm{v}^2_2/n$ and $\norm{v}_n=\norm{v}_2/\sqrt{n}$. The analysis estimator $\hat{f}_{\text{A}}$ of $f^0$ is defined as
$$ \hat{f}_{\text{A}}:= \arg \min_{f\in \R^n} \left\{\norm{Y-f}^2_n+2\lambda\norm{\D f}_1 \right\}, \lambda>0.$$

The rationale behind this kind of estimator is that we know or suspect that the true signal $f^0$ is s.t. $\1norm{\D f^0}$ is small. So, what the analysis estimator does is to make a tradeoff between the fidelity to the observed data in $\ell^2$-norm and the fidelity to the structure we think $f^0$ has (which is encoded in $\mathcal{D}$) in $\ell^1$-norm. It is called analysis estimator since the candidate estimator $f$ of the signal is analyzed, i.e. some aspects of it, the rows of $\D f$, are calculated and penalized during the estimation process.

On the other side there is the synthesis estimator $\hat{f}_{\text{S}}$ of $f^0$, which expresses another approach to estimation.
Let $\X\in \R^{n\times p}$ be a generic dictionary, whose columns constitute signal atoms.
The synthesis estimator $\hat{f}_{\text{S}}$ of $f^0$ is defined as

$$ \hat{f}_{\text{S}}=\X\arg\min_{\beta\in\R^p} \left\{ \2nnorm{Y-\X\beta}+2\lambda\1norm{\beta} \right\}.$$

The rationale behind this kind of estimator is that we know or suspect that the true signal $f^0$ can be written as a sparse linear combination of columns of $\X$, also called dictionary atoms, i.e. as $f^0=\X\beta^0$, where $\beta^0$ is s.t. $\0norm{\beta^0}$ (or $\1norm{\beta^0}$, see compressed sensing literature) is small. So what the synthesis estimator does, is to trade off the fidelity in $\ell^2$-norm of a linear combination of dictionary atoms to the observed data  and the $\ell^1$-norm of the coefficients of this linear combination. This estimator is called synthesis estimator since a candidate estimator $f=\X\beta$ is synthesized (i.e. constructed) from a linear combination of (possibly few) dictionary atoms.

In the literature, \cite{elad07} make a connection between these two approaches to estimation by proposing a way to obtain equivalent synthesis estimators from analysis estimators.

In this article we want to deepen the understanding of this connection and explore its implications for the total variation regularized estimators over general graphs, which are instances of analysis estimators. Some questions arise, for instance:
\begin{itemize}
\item Can we find a synthesis form for total variation regularized estimators?
\item If yes, how does this synthesis form look like?
\item How can the synthesis form of total variation regularized estimators favour and ease the visualization, the understanding and the proofs of theoretical properties for such estimators?
\end{itemize}

In this article we derive a framework to construct dictionaries giving place to synthesis estimators equivalent to the analysis form of  different examples of total variation regularized estimators, which, as far as we know, is a new contribution.
Moreover we explain how the synthesis approach to total variation regularization can favour the development of more accurate and interpretable theory for total variation regularized estimators.
Finally we expose some examples of dictionaries arising from total variation regularization problems.

The aim of this text is thus to convey, by considering a complementary approach, a different perspective on total variation regularization.

\subsection{Total variation regularized estimators}

Let $\vec{G}=(V,E), V=[n], E=\{e_1, \ldots, e_m\}$ be a directed graph, where $V$ is the set of its vertices and $E$ is the set of its edges.
The graphs we consider are directed, this means that any edge $e_i=(e_i^-, e_i^+), i \in [m]$ is directed from vertex $e_i^-$ to vertex $e_i^+$.

\begin{definition}[Incidence matrix of $\vec{G}$]\label{d01s1}
The incidence matrix $D\in \{-1,0,1\}^{m \times n}$ of the graph $\vec{G}$ is given by
$$ (D_{\vec{G}})_{ij}= \begin{cases} -1, & j = e_i^-\\
+1, & j= e_i^+ \\
0, & \text{else} \end{cases}, i \in [m], j \in [n].$$
\end{definition}

\begin{note}
Note that, for a signal $f\in \R^n$, $D_{\vec{G}}f$ computes the first order differences of $f$ across the directed edges of $\vec{G}$. Therefore it is also called first order discrete graph derivative operator and written $D_{\vec{G}}=:D^1_{\vec{G}}$.
\end{note}

We now want to generalize the concept of discrete graph derivative operator to a general order $k\in \mathbb{N}$, i.e. we want to find an expression for $D^k_{\vec{G}}$. To do so we need to introduce the notion of line graph.

\begin{definition}[Directed line graph, \cite{levi11}]\label{d02s1}
Let $\vec{G}=(V,E)$ be a directed graph.
 The directed line graph $\vec{L}(\vec{G})$ of $\vec{G}$ is defined as
$$ \vec{L}(\vec{G})=(E, E_2),$$
where
$$ E_2= \left\{(e_i, e_j)\in E \times E \middle| e_i^+=e_j^- \right\}.$$
\end{definition}

\begin{note}
The idea behind using the line graph is the following: when computing differences along the direction of the graph, we obtain a value for each edge of the graph.
Since we follow the direction of the edges, these values represent the first discrete derivative of the signal with respect to the graph, and constitute the vertices of the line graph.
Two vertices of a directed line graph are then connected by an edge if the corresponding two edges in the underlying directed graph point into the same direction, i.e. they neither diverge nor collide.
\end{note}

\begin{definition}[$k^{\text{th}}$ order discrete graph derivative operator] \label{d03s1}
Let $k \in \mathbb{N}$.
For a directed graph $\vec{G}=(V,E)$, we define the $k^{\text{th}}$ order discrete graph derivative operator as
$$ D^k_{\vec{G}} :=\begin{cases} D_{\vec{G}} \prod_{i=1}^{k-1} D_{{\vec{L}}^i(\vec{G})} , &k>1,\\
D_{\vec{G}}, & k=1,\end{cases}$$
where $ {\vec{L}}^i (\vec{G})= \vec{L}( {\vec{L}}^{i-1}(\vec{G})), i \ge 2$ and ${\vec{L}}^1(\vec{G})= \vec{L}(\vec{G})$ is the directed line graph of $\vec{G}$.
\end{definition}

\begin{definition}[$k^{\text{th}}$ order total variation]\label{d04s1}
Let $f\in \R^n$ be a vector and $\vec{G}$ a directed graph.
The quantity $\norm{D_{\vec{G}}^k f}_1$ is called $k^{\text{th}}$ order total variation of $f$ on the graph $\vec{G}$.
\end{definition}

\begin{note}
Note that we use $k$ in the meaning used by \cite{gunt17} but not in the meaning used by \cite{wang16}. Indeed, in the notation used by \cite{wang16}, $\norm{D^{(k+1)}_{\vec{G}}f}_1$ would be called the $(k+1)^{\text{th}}$ order total variation of $f$ on $\vec{G}$. Thus the first order total variation would be obtained with $k=0$.
\end{note}

\begin{definition}[$k^{\text{th}}$ order total variation regularized estimator over the graph $\vec{G}$]\label{d05s1}
Let $k\in \mathbb{N}$, and let $\vec{G}=(V,E)$ be a directed graph.
The $k^{\text{th}}$ order total variation regularized estimator over the graph $\vec{G}$ is defined as
$$ \hat{f}=\arg \min_{f \in \R^n} \left\{\norm{Y-f}^2_n+ 2n^{k-1} \lambda \norm{D^k_{\vec{G}}f}_1 \right\}, \lambda >0.$$
\end{definition}

\begin{remark}
Note that the $1^{\text{st}}$ order total variation regularized estimator does not depend on the orientation of the edges, since only the absolute value of the edge differences is penalized.
However, higher order total variation regularized estimators are dependent on the orientation of the edges of $\vec{G}$. Indeed, consider the case of the path graph with one branch.
There are three second order differences centered around the ramification point.
Depending on which of the three leaves of the graph is (chosen as) the root of the graph, then only two of them are going to be penalized.
\end{remark}

\subsection{Notation}

By $\text{I}_n$ we denote the $n \times n$ identity matrix.
By $ 1_n$ we denote a vector with $n$ entries, all of them being ones.
By $\D \in \R^{m \times n}$ and $\X\in \R^{n \times p}$ we denote a general analyzing operator, respectively a general dictionary.

Let $\D_i, i \in [m]$ denote the $i^\text{th}$ row of an analysing operator $\D \in \R^{m \times n}$ and let $U \subseteq [m]$ be a set of row indices of $\D$. Then we define $\D_U=\{\D_i\}_{i \in U}$ and $\D_{-U}=\{\D_i\}_{i \in -U}$, where $-U=[m]\setminus U$.

Let $\X_j, j \in [p]$ denote the $j^\text{th}$ column of an analysing operator $\X \in \R^{n \times p}$  and let $U \subseteq [p]$ be a set of column indices of $\X$. Then we define $\X_U=\{\X_j\}_{j \in U}$ and $\D_{-U}=\{\D_j\}_{j \in -U}$, where $-U=[p]\setminus U$.

Let $\mathcal{V} \subset \R^n$ be a linear space. By $\Pi_{\mathcal{V}}\in \R^{n \times n} $ we denote the orthogonal projection matrix onto $\mathcal{V}$ and by $A_{\mathcal{V}}:= \text{I}_n-\Pi_{\mathcal{V}}$ the respective antiprojection matrix.

In the case where $\mathcal{V}= \text{rowspan}(\D_U)$, we use the shorthand notations $\Pi_{U}:=\Pi_{\text{rowspan}(\D_U)}= \D_U' (\D_U \D_U')^{-1} \D_U$ and $A_U=A_{\text{rowspan}(\D_U)}$, where we assume that $\D_U \D_U'$ is invertible.

In the case where $\mathcal{V}= \text{colspan}(\X_U)$, we use the shorthand notations $\Pi_{U}:=\Pi_{\text{colspan}(\X_U)}= \X_U (\X_U' \X_U)^{-1} \X_U'$ and $A_U=A_{\text{colspan}(\X_U)}$, where we assume that $\X_U' \X_U$ is invertible.

For any matrix $M$, let $\N(M)$ denote its nullspace and $\N^{\perp}(M)$ the orthogonal complement of its nullspace.

\section{Tools}\label{s2}

To make the exposition and the development of later insights more fluid, we expose already here some simple tools.

\subsection{Lasso with some unpenalized coefficients}

For this subsection, we assume the linear model, i.e. that $f^0$ can be written as some linear combination of dictionary atoms. This means that we assume $f^0=\X\beta^0$, where $\X\in\R^{n\times p}$ and $\beta^0\in\R^p$. 

We are going to consider two problems:
\begin{itemize}

\item A Lasso problem with coefficients indexed by $U\subseteq[p]$ not being penalized, i.e.
$$ \hat{\beta}:= \arg\min_{\beta\in\R^p} \left\{\norm{Y-\begin{bmatrix}\X_U & \X_{-U}\end{bmatrix}\beta}^2_n+2\lambda\norm{\beta_{-U}}_1 \right\}, \lambda>0;$$

\item A variant of the above problem, where the part of the design matrix corresponding to the coefficients being penalized is multiplied by the antiprojection onto the linear span of the columns of the design matrix corresponding to the coefficients not being penalized, i.e.
$$ \hat{\beta}^{\Pi}:= \arg\min_{\beta\in\R^p} \left\{\norm{Y-\begin{bmatrix}\X_U & A_U \X_{-U}\end{bmatrix}\beta}^2_n+2\lambda\norm{\beta_{-U}}_1 \right\}, \lambda>0.$$

\end{itemize}
Write
$$\hat{f}:=\begin{bmatrix}\X_U & \X_{-U}\end{bmatrix}\hat{\beta}$$
and
$$\hat{f}^{\Pi}:=\begin{bmatrix}\X_U & A_U \X_{-U}\end{bmatrix}\hat{\beta}^{\Pi}.$$

\begin{lemma}[cf. also Exercies 6.9 in \cite{buhl11}]\label{l21}
We have that
$$\hat{f}=\hat{f}^{\Pi}.$$
\end{lemma}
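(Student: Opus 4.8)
The plan is to profile out the unpenalized block $\beta_U$ in both problems and to show that they collapse to the \emph{same} Lasso in $\beta_{-U}$. Since $\beta_U$ is unpenalized, for a fixed $\beta_{-U}$ the inner minimization over $\beta_U$ is an ordinary least squares problem whose solution is a projection onto $\text{colspan}(\X_U)$; the residual left after this projection is governed by the antiprojection $A_U$, and this is exactly where the factor $A_U$ multiplying $\X_{-U}$ in the second problem is designed to cancel. The single algebraic identity doing all the work is $\Pi_U A_U = \Pi_U(\text{I}_n - \Pi_U) = 0$.

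Concretely, I would first fix $\beta_{-U}$ in the first problem and minimize over $\beta_U$. The minimizing fit is $\X_U \hat\beta_U = \Pi_U(Y - \X_{-U}\beta_{-U})$, leaving residual $A_U(Y - \X_{-U}\beta_{-U})$, so that the profiled objective becomes
$$ J(\beta_{-U}) := \2nnorm{A_U Y - A_U \X_{-U}\beta_{-U}} + 2\lambda \1norm{\beta_{-U}}. $$
Doing the same for the second problem, the optimal fit is $\X_U\hat\beta_U^{\Pi} = \Pi_U(Y - A_U\X_{-U}\beta_{-U}) = \Pi_U Y$, where the second term vanishes because $\Pi_U A_U = 0$; the residual is then again $A_U Y - A_U\X_{-U}\beta_{-U}$, so the profiled objective is once more exactly $J(\beta_{-U})$. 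Hence the two problems induce the identical minimization over $\beta_{-U}$, and we may take $\hat\beta_{-U} = \hat\beta_{-U}^{\Pi}$.

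It then remains to reconstruct the fitted values. Substituting the optimal $\beta_U$ back, for the first problem I obtain $\hat f = \Pi_U(Y - \X_{-U}\hat\beta_{-U}) + \X_{-U}\hat\beta_{-U} = \Pi_U Y + A_U\X_{-U}\hat\beta_{-U}$, while for the second $\hat f^{\Pi} = \Pi_U Y + A_U\X_{-U}\hat\beta_{-U}^{\Pi}$. These coincide, yielding $\hat f = \hat f^{\Pi}$.

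The one subtlety I would treat most carefully is non-uniqueness of the minimizers: since $\hat\beta$ and $\hat\beta^{\Pi}$ need not be unique, I would phrase the conclusion in terms of the fitted values, relying on the standard fact that the fit of a Lasso problem is unique. The reduced problem $J$ has a unique fitted vector $A_U\X_{-U}\hat\beta_{-U}$, which is the common ingredient of both $\hat f$ and $\hat f^{\Pi}$, while $\Pi_U Y$ does not depend on $\beta_{-U}$ at all; this makes $\hat f = \hat f^{\Pi}$ an identity between well-defined objects irrespective of which minimizers are selected.
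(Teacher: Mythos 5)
Your proof is correct and takes essentially the same route as the paper: both decompose the squared loss along $\Pi_U$ and $A_U$, profile out the unpenalized block $\beta_U$, observe that the two problems collapse to the identical Lasso in $\beta_{-U}$, and reconstruct both fits as $\Pi_U Y + A_U \X_{-U}\hat{\beta}_{-U}$. Your explicit treatment of non-uniqueness via uniqueness of the Lasso fitted value is a small refinement that the paper leaves implicit, but it does not change the substance of the argument.
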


\begin{proof}[Proof of Lemma \ref{l21}]
See Appendix \ref{appA}.
\end{proof}

\begin{remark}
Note that $\hat{\beta}_U$ depends on $\hat{\beta}_{-U}$ and thus on $\lambda$, while $\hat{\beta}_{U}^{\Pi}$ does not.
\end{remark}

\begin{note}
Lemma \ref{l21} means that, when we have a Lasso problem with some coefficients not being penalized, we can add arbitrary quantities in the linear span of the columns of the design matrix corresponding to the unpenalized coefficients to its columns corresponding to the penalized coefficients without changing the prediction properties of the estimator.
\end{note}

\subsection{A view on the Moore-Penrose pseudoinverse}

\begin{lemma}[A view on the Moore-Penrose pseudoinverse]\label{l22}
Let $\D\in\R^{n\times n}$ be an invertible matrix. We write $\X:=\D^{-1}$. Let $U\subseteq [n]$ be a subset of the row indices of $\D$ of cardinality $\abs{U}=u$. We write
$$ \begin{bmatrix} \X_U & \X_{-U} \end{bmatrix}= \begin{bmatrix} \D_U \\ \D_{-U} \end{bmatrix}^{-1},$$
where $\X_U\in \R^{n \times U}$ and $\X_{-U}\in \R^{n \times (n-u)}$.
Then the Moore-Penrose pseudoinverse $\D^{+}_{-U}\in\R^{n\times (n-u)}$ of $\D_{-U}\in\R^{(n-u) \times n}$ is given by
$$ \D^{+}_{-U}= A_U \X_{-U},$$

where $A_U=\text{I}_n - \X_U(\X_U'\X_U)^{-1}\X_U'$.
\end{lemma}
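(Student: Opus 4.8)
The plan is to set $P := A_U \X_{-U}$ and verify directly that $P$ satisfies the four defining Moore--Penrose conditions for the pseudoinverse of $M := \D_{-U}$, namely $MPM=M$, $PMP=P$, $(MP)'=MP$ and $(PM)'=PM$; by uniqueness of the pseudoinverse this forces $P = \D_{-U}^{+}$. The whole computation rests on reading off the block identities encoded in $\X = \D^{-1}$. Writing $\D\X=\text{I}_n$ in the row/column partition gives the four relations $\D_U\X_U=\text{I}_u$, $\D_U\X_{-U}=0$, $\D_{-U}\X_U=0$, and $\D_{-U}\X_{-U}=\text{I}_{n-u}$, while writing the \emph{other} product $\X\D=\text{I}_n$ gives $\X_U\D_U+\X_{-U}\D_{-U}=\text{I}_n$. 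I will also use that $A_U$ is a symmetric idempotent (it is the orthogonal projection onto $\text{colspan}(\X_U)^{\perp}$), so $A_U'=A_U$, $A_U^2=A_U$, and $A_U\X_U=0$.

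The two key computations are the values of $MP$ and $PM$. For $MP=\D_{-U}A_U\X_{-U}$, first note $\D_{-U}A_U=\D_{-U}-\D_{-U}\X_U(\X_U'\X_U)^{-1}\X_U'=\D_{-U}$, because $\D_{-U}\X_U=0$; hence
$$ MP=\D_{-U}A_U\X_{-U}=\D_{-U}\X_{-U}=\text{I}_{n-u}. $$
For $PM=A_U\X_{-U}\D_{-U}$, I substitute $\X_{-U}\D_{-U}=\text{I}_n-\X_U\D_U$ from the $\X\D=\text{I}_n$ identity and use $A_U\X_U=0$:
$$ PM=A_U(\text{I}_n-\X_U\D_U)=A_U-A_U\X_U\D_U=A_U. $$

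With $MP=\text{I}_{n-u}$ and $PM=A_U$ in hand, all four conditions are immediate: $MPM=\text{I}_{n-u}\D_{-U}=\D_{-U}=M$; $PMP=A_U\,A_U\X_{-U}=A_U\X_{-U}=P$ by idempotency; $MP=\text{I}_{n-u}$ is symmetric; and $PM=A_U$ is symmetric. Hence $P=A_U\X_{-U}=\D_{-U}^{+}$.

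I do not expect a genuine obstacle here, since everything collapses once the block identities are recorded. The only point requiring care is that computing $PM$ needs the relation coming from $\X\D=\text{I}_n$ (giving $\X_U\D_U+\X_{-U}\D_{-U}=\text{I}_n$), which is logically distinct from the four relations coming from $\D\X=\text{I}_n$; using only the latter suffices for $MP$ but not for $PM$. Invertibility of $\D$ guarantees both products equal $\text{I}_n$ and that $\D_{-U}$ has full row rank (so its pseudoinverse is genuinely its right inverse on the appropriate subspace), which is what makes $MP=\text{I}_{n-u}$ consistent.
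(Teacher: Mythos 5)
Your proposal is correct and follows essentially the same route as the paper: it verifies the four Moore--Penrose equations for $A_U\X_{-U}$ using the block identities $\D\X=\text{I}_n$ (giving $\D_{-U}\X_U=0$, $\D_{-U}\X_{-U}=\text{I}_{n-u}$) and $\X\D=\text{I}_n$ (giving $\X_{-U}\D_{-U}=\text{I}_n-\X_U\D_U$), invoking uniqueness of the pseudoinverse. Your organization is marginally tidier --- computing $MP=\text{I}_{n-u}$ and $PM=A_U$ once and deducing all four conditions from them, rather than checking each separately as the paper does --- but the argument is the same.
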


\begin{proof}[Proof of Lemma \ref{l22}]
See Appendix \ref{appA}.
\end{proof}

\begin{remark}
Lemma \ref{l22} tells us that the Moore-Penrose pseudoinverse of an underdetermined matrix of full row rank $\D_{-U} \in \R^{(n-u)\times n}$ can be obtained as follows:

\begin{enumerate}
\item Add $u$ linearly independent rows to $\D_{-U}$, i.e. add $\D_U \in \R^{u \times n}$ on top of $\D_{-U}$ to obtain the invertible matrix $\D$.
\item Invert $\D$ to obtain $\X$.
\item Do the antiprojection of $\X_{-U}\in \R^{n\times (n-u)}$ onto the column span of $\X_U \in \R^{n \times u}$.
\end{enumerate}
\end{remark}

\section{Analysis versus synthesis}\label{s3}

In this section we are going to discuss the relation between analysis and synthesis as exposed in \cite{elad07}. All this section is adapted from their article and will be the basis for an extension of their theory in Sections \ref{s4} and \ref{s5}.

The main point distinguishing the analysis from the synthesis approach is that the synthesis approach is \textbf{constructive}, i.e. it tells us how to construct the estimator by using a sparse linear combination of the dictionary atoms. This means that it allows us to perform model selection (and eventually refitting), i.e.  to select which columns of the dictionary are relevant for describing the signal.

Moreover, for the analysis method the dimension of the unknown $f\in\R^n$ is relatively small, while it can be very large for the unknown $ \beta \in \R^p$ in the synthesis method.

Note that in the analysis estimator all the rows of $\D f$ get the same weight when it comes to computing the penalty $\norm{\D f}_1$.

\begin{remark}
The article by \cite{elad07} considers penalties with general $\ell_p^p$-norm. In this paper only penalties with the $\ell^1$-norm are of interest.
\end{remark}

\subsection{The underdetermined case}
\begin{lemma}[Underdetermined case: $m<n$. Theorem 2 in \cite{elad07}]\label{l31}
Let $\D\in\R^{m\times n}, m<n$ be an analyzing operator of rank $m$. Let $\Pi_\D:= \Pi_{\text{rowspan}(\D)}$ denote the projection matrix onto the row space of $\D$ and let $A_\D:=\text{I}_n-\Pi_\D$ be the projection onto the kernel of $\D$. Then
$$
\hat{f}_A=\hat{f}_S+A_\D Y,
$$
where the synthesis estimator is obtained with the dictionary $\X=\D^+$ and $\D^+\in\R^{n\times m}$ is the Moore-Penrose pseudoinverse of $\D$.
\end{lemma}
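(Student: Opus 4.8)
The plan is to reparametrize the analysis problem by splitting every candidate $f$ into its component in $\text{rowspan}(\D)$ and its component in $\N(\D)$, and then to recognize the synthesis problem inside the resulting optimization.

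First I would exploit the full row rank of $\D$. Since $\D^+=\D'(\D\D')^{-1}$, we have $\D\D^+=\text{I}_m$, and the columns of $\X=\D^+$ span $\text{rowspan}(\D)$, which is orthogonal to $\N(\D)=\text{colspan}(A_\D)$. Because $m<n$ and $\rank(\D)=m$, every $f\in\R^n$ can be written uniquely as $f=\D^+\beta+u$ with $\beta\in\R^m$ and $u\in\N(\D)$, so minimizing over $f$ is the same as minimizing jointly over $(\beta,u)$. The key simplification is that $\D f=\D\D^+\beta+\D u=\beta$, so the penalty collapses to $\1norm{\D f}=\1norm{\beta}$, exactly the Lasso penalty on the synthesis coefficients.

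Next I would decompose the data fidelity term. Writing $Y=\Pi_\D Y+A_\D Y$, the residual $Y-\D^+\beta-u$ splits into a row-space part $\Pi_\D Y-\D^+\beta$ and a kernel part $A_\D Y-u$; since these are orthogonal, $\2nnorm{Y-\D^+\beta-u}=\2nnorm{\Pi_\D Y-\D^+\beta}+\2nnorm{A_\D Y-u}$. For every fixed $\beta$ the minimizing choice is $u=A_\D Y$, which annihilates the second summand and is independent of both $\beta$ and $\lambda$. Substituting $u=A_\D Y$ leaves the reduced objective $\2nnorm{\Pi_\D Y-\D^+\beta}+2\lambda\1norm{\beta}$ in $\beta$. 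The same orthogonal splitting gives $\2nnorm{Y-\D^+\beta}=\2nnorm{\Pi_\D Y-\D^+\beta}+\2nnorm{A_\D Y}$, whose last term is constant in $\beta$; hence the reduced analysis problem and the synthesis problem with dictionary $\X=\D^+$ share the minimizer $\hat{\beta}$. Therefore $\hat{f}_S=\D^+\hat{\beta}$ and $\hat{f}_A=\D^+\hat{\beta}+A_\D Y=\hat{f}_S+A_\D Y$, as claimed.

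The main obstacle I anticipate is the bookkeeping around the orthogonal decomposition rather than anything deep: one must verify carefully that $\D^+\beta$ always lands in $\text{rowspan}(\D)$ and that $(\beta,u)\mapsto\D^+\beta+u$ is a genuine bijection from $\R^m\times\N(\D)$ onto $\R^n$, which is precisely where the assumption $\rank(\D)=m$ enters. If one wishes to avoid any appeal to uniqueness of the Lasso minimizer $\hat{\beta}$, the cleanest route is to phrase the equalities at the level of the fitted values, since the identities above hold regardless of which minimizer is selected.
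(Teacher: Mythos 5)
Your proof is correct and follows essentially the same route as the paper: both decompose along $\R^n=\text{rowspan}(\D)\oplus\N(\D)$, collapse the penalty via $\D\D^+=\text{I}_m$, set the kernel component of the minimizer to $A_\D Y$, and absorb the constant $\norm{A_\D Y}^2_n$ to recover the synthesis objective. Your explicit parametrization $f=\D^+\beta+u$ is just the paper's splitting $f=\Pi_\D f+A_\D f$ with $\Pi_\D f=\X\beta$, and you even state the rank facts more carefully than the paper does (which misstates full row rank as ``full column rank'' and $\D\X=\text{I}_m$ as $\text{I}_n$).
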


We report the proof by \cite{elad07} of Lemma \ref{l31} because the intuition behind it is of relevance when finding an equivalent synthesis form for total variation regularized estimators.

\begin{proof}[Proof of Lemma \ref{l31}]
See Appendix \ref{appB}
\end{proof}

We would now like to derive a more handy expression for the synthesis estimator.

\begin{lemma}\label{l311}
Let $\D\in \R^{m \times n}$, where $m=n-u$, be s.t. $\rank(\D)=n-u$.
Let $A \in \R^u$ be a matrix of full rank, s.t. $\tilde{\D}:= \begin{bmatrix} A \\ \D \end{bmatrix}$ is invertible. Write $\X= \begin{bmatrix} \X_U & \X_{-U} \end{bmatrix}:= \tilde{D}^{-1}$. 
Then

\begin{eqnarray*}
\hat{f}_{\text{A}}&=& \arg \min_{f \in \R^n} \left\{ \norm{Y-f}^2_n + 2 \lambda \norm{\D f}_1 \right\}\\
&=& \begin{bmatrix} \X_U &  \X_{-U} \end{bmatrix} \arg \min_{\beta\in \R^n} \left\{\norm{Y-\begin{bmatrix}\X_U &  \X_{-U}\end{bmatrix}\beta}^2_n + 2\lambda \norm{\beta_{-U}}_1 \right\}.
\end{eqnarray*}

\end{lemma}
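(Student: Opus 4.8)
The plan is to exhibit a bijective change of variables between the analysis parameter $f$ and the synthesis parameter $\beta$ under which the two objective functions coincide term by term; the equivalence of the minimizers then follows at once. Since no optimality conditions or duality are involved, this is essentially a substitution argument and does not require appealing to Lemmas \ref{l21} or \ref{l22}.

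First I would record that, because $\tilde{\D}$ is invertible with $\X=\tilde{\D}^{-1}$, the map $f\mapsto\beta:=\tilde{\D}f$ is a bijection of $\R^n$ onto itself, with inverse $\beta\mapsto f=\X\beta=\begin{bmatrix}\X_U & \X_{-U}\end{bmatrix}\beta$. Under this correspondence the data-fidelity terms agree immediately: substituting $f=\begin{bmatrix}\X_U & \X_{-U}\end{bmatrix}\beta$ gives $\norm{Y-f}^2_n=\norm{Y-\begin{bmatrix}\X_U & \X_{-U}\end{bmatrix}\beta}^2_n$.

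Next I would read off the block structure of $\tilde{\D}=\begin{bmatrix} A \\ \D \end{bmatrix}$. Its first $u$ rows (indexed by $U$) are those of $A$, and its remaining $n-u$ rows (indexed by $-U$) are those of $\D$. Hence the relation $\beta=\tilde{\D}f$ decomposes as $\beta_U=Af$ and, crucially, $\beta_{-U}=\D f$. Consequently $\norm{\D f}_1=\norm{\beta_{-U}}_1$, so the penalty terms also match under the change of variables. Combining the two observations, the analysis objective evaluated at $f$ equals the synthesis objective evaluated at $\beta=\tilde{\D}f$ for every $f\in\R^n$. Since the substitution is a bijection preserving the objective value, the minimizers correspond: if $\hat{\beta}$ minimizes the synthesis objective, then $\X\hat{\beta}$ minimizes the analysis objective, so that $\hat{f}_{\text{A}}=\X\hat{\beta}=\begin{bmatrix}\X_U & \X_{-U}\end{bmatrix}\hat{\beta}$, which is the asserted identity.

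I expect the only point requiring genuine care to be the index bookkeeping: one must confirm that $U$ labels the rows contributed by $A$ (and hence the leading block of columns $\X_U$ of $\tilde{\D}^{-1}$), while $-U$ labels the rows of $\D$, so that the \emph{penalized} block $\beta_{-U}$ is precisely $\D f$ rather than $Af$. Once this identification is pinned down, no residual computation remains and the proof closes.
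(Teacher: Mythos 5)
Your proof is correct, and it takes a genuinely different and more elementary route than the paper's. The paper obtains the lemma by chaining three earlier results: Lemma \ref{l22} to identify $A_U \X_{-U}$ as the Moore--Penrose pseudoinverse $\D^+$, Lemma \ref{l31} (the underdetermined case) to write $\hat{f}_{\text{A}} = \hat{f}_{\text{S}} + A_\D Y$ with dictionary $\D^+$, and finally Lemma \ref{l21} to trade the design $\begin{bmatrix}\X_U & A_U\X_{-U}\end{bmatrix}$ for $\begin{bmatrix}\X_U & \X_{-U}\end{bmatrix}$ without changing the fit. You instead use only the invertibility of $\tilde{\D}$: the change of variables $\beta = \tilde{\D}f$ is a bijection of $\R^n$, the fidelity terms match by $f = \X\beta$, and the block structure of $\tilde{\D}$ gives $\beta_U = Af$ and $\beta_{-U} = \D f$, so the penalties match as well; minimizers then correspond under the bijection. (Equivalently, from $\tilde{\D}\X = \text{I}_n$ one reads off $\D\X_U = 0$ and $\D\X_{-U} = \text{I}_{n-u}$, confirming your index bookkeeping.) This is exactly the paper's proof of the square invertible case, Lemma \ref{l32}, upgraded to the augmented matrix, and since both objectives are strictly convex the identification of the (unique) minimizers is immediate. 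What the paper's longer route buys is structural information that your substitution leaves implicit: it makes explicit the decomposition of the estimator into an unpenalized component in $\N(\D)$ plus a penalized component in the row space, and it identifies the penalized dictionary block with $\D^+$ --- identifications that are reused in Theorem \ref{t01s4} and in the graph-dictionary constructions of Section \ref{s5}. Your argument, in exchange, proves the stated equivalence with no machinery at all.
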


\begin{proof}[Proof of Lemma \ref{l311}]
See Appendix \ref{appB}
\end{proof}

\subsection{The square invertible case}

\begin{lemma}[Square invertible case: $m=n$ and $\D$ is non singular. Theorem 1 in \cite{elad07}]\label{l32}
The analysis estimator $\hat{f}_{\text{A}}$ with invertible $\D\in\R^{n\times n}$ and the synthesis estimator $\hat{f}_{\text{S}}$ with $\X=\D^{-1}$ are equivalent.

\end{lemma}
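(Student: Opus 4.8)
The plan is to exploit the invertibility of $\D$ through a bijective change of variables, so that the two optimization problems become literally the same problem written in different coordinates. Since $\D\in\R^{n\times n}$ is nonsingular, the map $f\mapsto\beta:=\D f$ is a bijection of $\R^n$ onto itself, with inverse $\beta\mapsto f=\D^{-1}\beta=\X\beta$ for $\X=\D^{-1}$. I would substitute $f=\X\beta$ into the analysis objective and check that each term transforms into the corresponding term of the synthesis objective.

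First I would rewrite the fidelity term, which is immediate: $\norm{Y-f}^2_n=\norm{Y-\X\beta}^2_n$. Then I would rewrite the penalty, using $\D\X=\D\D^{-1}=\text{I}_n$, to obtain $\norm{\D f}_1=\norm{\D\X\beta}_1=\norm{\beta}_1$. Hence, for every $\beta\in\R^n$, the analysis objective evaluated at $f=\X\beta$ equals the synthesis objective evaluated at $\beta$. Because the substitution is a bijection, minimizing over $f\in\R^n$ is equivalent to minimizing over $\beta\in\R^n$ and setting $f=\X\beta$; in particular the sets of minimizers are in one-to-one correspondence. Therefore, if $\hat{\beta}$ is a minimizer of the synthesis objective, then $\X\hat{\beta}$ is a minimizer of the analysis objective, giving $\hat{f}_{\text{A}}=\X\hat{\beta}=\hat{f}_{\text{S}}$ by the definition of the synthesis estimator with dictionary $\X=\D^{-1}$.

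Alternatively, I would note that this square invertible case is exactly the degenerate case $u=0$ of Lemma \ref{l311}: when $\D$ is already invertible no rows need be appended, so $\tilde{\D}=\D$, the index set is $U=\emptyset$, and consequently $\X_{-U}=\X=\D^{-1}$ with $\beta_{-U}=\beta$. Under these identifications the displayed identity of Lemma \ref{l311} collapses precisely to $\hat{f}_{\text{A}}=\hat{f}_{\text{S}}$, which furnishes a second, one-line derivation.

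I do not expect a genuine obstacle here, since the result is the simplest special case of the theory already developed. The only point requiring a small amount of care is the bijectivity of the substitution $f\leftrightarrow\beta=\D f$: this is what upgrades the pointwise equality of objective values into a genuine correspondence of \emph{minimizers}, and it is exactly where the invertibility hypothesis on $\D$ is used.
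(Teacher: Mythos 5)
Your proposal is correct and follows essentially the same route as the paper: both substitute $\beta=\D f$ (equivalently $f=\X\beta$), use $\D\X=\text{I}_n$ to turn the penalty $\norm{\D f}_1$ into $\norm{\beta}_1$, and conclude that the two problems are the same up to this bijective change of variables. Your explicit remark that bijectivity is what transfers minimizers (not just objective values) makes precise a point the paper leaves implicit, but it is the same argument.
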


\begin{proof}[Proof of Lemma \ref{l32}]
See Appendix \ref{appB}
\end{proof}

\subsection{The overdetermined case}\label{ss26}

We are now going to expose the theory by \cite{elad07} on the overdetermined case, i.e. the case where $\D\in\R^{m\times n}$ has $m>n$ and the matrix $\D$ is assumed to be of rank $n$.

The way suggested to find a synthesis estimator equivalent to the analysis estimator is based on a geometric interpretation of the two methods. All this section is adapted from \cite{elad07} and summarizes some aspects of that article which are relevant for our research.

\medskip

Consider the analysis and the synthesis estimator and assume $\D$ is of \textbf{full column rank}, i.e. $\text{rank}(\D)=n$ and $\X$ is of \textbf{full row rank}, i.e. $\text{rank}(\X)=n$. Then the analysis and synthesis problems can be rewritten as:
\begin{itemize}
\item Analysis:\\
$$ \hat{f}_{\text{A}}(a)=\arg\min_{f: \nnorm{Y-f}\le a} \1norm{\D f};$$

\item Synthesis:\\
$$ \hat{f}_{\text{S}}(a)=\arg\min_{\beta: \nnorm{Y-\X\beta}\le a} \1norm{\beta}.$$

\end{itemize}

Since $\X$ is of full rank, both the analysis and the synthesis estimator are constrained to be in the same set of radius $a$, i.e. in
$$ \left\{f: \nnorm{Y-f}\le a \right\}=\left\{f: \exists \beta : f=\X\beta \wedge \nnorm{Y-\X\beta}\le a \right\}.$$

In the following we assume that $0\notin \left\{f: \nnorm{Y-f}\le a \right\}$, otherwise the origin is the obvious solution to both problems.

The level sets of the target functions of the alternative formulation of the analysis and synthesis problems are collections of concentric, centro-symmetric polytopes around the origin, which can be inflated or deflated by simple scaling.

\begin{itemize}
\item Analysis:\\
$$ \left\{f: \1norm{\D f}\le c \right\}= c \left\{f: \1norm{\D f}\le 1 \right\}=:c \Psi_\D ;$$

\item Synthesis:\\
\begin{eqnarray*} \left\{f: \exists \beta: f=\X\beta \wedge \1norm{\beta}\le c \right\}&=:& \X \left\{\beta: \1norm{\beta}\le c \right\}\\
&=&  c \X \left\{\beta: \1norm{\beta}\le 1 \right\}=: c \Phi_\X .
\end{eqnarray*}
\end{itemize}

The behavior of the analysis and synthesis estimators is determined by the geometry of these two polytopes. Indeed the estimators, when starting with polytopes scaled with a small enough $c$, can be found by inflating these polytopes by increasing $c$ until the inflated polytopes hit the boundary of the region of radius $a$ aournd $Y$.
Specifying the \textbf{canonical polytopes} $\Psi_\D, \Phi_\X$ is equivalent to specifying $\D$ and $\X$.
For this reason $\Psi_\D$ is called (canonical) analysis defining polytope and $\Phi_\X$ is called (canonical) synthesis defining polytope.

\medskip

We now introduce some polytope terminology.

For an $n$-dimensional polytope we use the following terms:
\begin{itemize}
\item Boundary: $(n-1)$-dimensional manifold;
\item Facet: $(n-1)$-dimensional surface, i.e. an $(n-1)$-dimensional face;
\item Boundary of facets: $(n-2)$-dimensional faces;
\item \ldots;
\item Ridge: 2-dimensional face;
\item Edge: 1-dimensional face;
\item Vertex: 0-dimensional face.
\end{itemize}

\medskip

Let us first consider the analysis defining polytope.
Consider the subdifferential $\nu(f)$ of $\1norm{\D f}$, which is also the normal to the surface of the polytope. By the chain rule of the subdifferential we have that
$$ \nu(f)=\D'\text{sgn}(\D f),$$
where
$$ \sgn(x)\begin{cases} =1, & x >0,\\
\in [-1,1], & x =0 , \\
= -1, & x<0.
\end{cases}$$

\underline{\textbf{Remarks on $\nu(f)$}}:\\
\begin{itemize}
\item $\nu(f)$ has a discontinuity wherever a coordinate of $\D f$ vanishes. This discontinuity is arbitrarily filled in by the sign function. Thus $\nu(f)$ is piecewise smooth. In particular:
\begin{itemize}
\item at the facets, $\nu(f)$ is smooth;
\item at other faces (lower dimensional), $\nu(f)$ is discontinuous.
\end{itemize}
\item If $f$ is orthogonal to some row in $\D$ then $\nu(f)$ has discontinuities.
\end{itemize}

Let $\delta\Psi_\D$ denote the boundary of $\Psi_\D$.

\begin{lemma}[Claim 1 in \cite{elad07}]\label{l33}
Let $f\in\delta\Psi_\D$, where $\Psi_\D$ is the (canonical) $n$-dimensional analysis defining polytope. Let $k\in[n]$ be the rank of the rows to which f is orthogonal. Then $f$ is strictly wihtin a face of dimension $(n-k-1)$ of $\Psi_\D$.
\end{lemma}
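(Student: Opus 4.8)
The plan is to read off the $H$-representation of $\Psi_\D$ and then identify the smallest face containing $f$ through its active constraints. First I would write $\1norm{\D f}=\max_{s\in\{-1,+1\}^m} s'\D f$, so that
$$\Psi_\D=\bigcap_{s\in\{-1,+1\}^m}\left\{g\in\R^n : (\D' s)' g\le 1\right\},$$
a full-dimensional, centro-symmetric polytope in $\R^n$ (full-dimensional because $\rank(\D)=n$ makes $g\mapsto\1norm{\D g}$ a genuine norm). A boundary point $f$ satisfies $\1norm{\D f}=1$, and the constraint indexed by $s$ is active at $f$ precisely when $s'\D f=\1norm{\D f}$, i.e. when $s_i=\sgn((\D f)_i)$ for every $i$ with $(\D f)_i\neq 0$, while the entries $s_i$ are free for $i$ in the set $Z:=\{i : \D_i f=0\}$ of rows orthogonal to $f$.

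Next I would describe the smallest face $F$ of $\Psi_\D$ containing $f$, namely the intersection of $\Psi_\D$ with all supporting hyperplanes active at $f$. Fixing one active sign vector $s^0$ (say $s^0_i=\sgn((\D f)_i)$ off $Z$ and $s^0_i=+1$ on $Z$), every other active $s$ differs from $s^0$ only in coordinates of $Z$; subtracting the corresponding equations $s'\D g=1$ forces $\D_i g=0$ for every $i\in Z$ on the affine hull of $F$, that is $\D_Z g=0$, and conversely once these hold all active constraints collapse to the single equation $(s^0)'\D g=1$. Hence the affine hull of $F$ is $\{g : \D_Z g=0,\ (s^0)'\D g=1\}$.

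Then I would compute the dimension. The solution set of $\D_Z g=0$ is $\N(\D_Z)$, of dimension $n-k$ since $\rank(\D_Z)=k$ by hypothesis. The remaining affine constraint $(s^0)'\D g=1$ is nontrivial on this subspace: indeed $f\in\N(\D_Z)$ and $(s^0)'\D f=\1norm{\D f}=1\neq 0$, so the linear functional $g\mapsto(s^0)'\D g$ does not vanish on $\N(\D_Z)$ and lowers its dimension by exactly one. Finally I would check that $f$ lies in the relative interior of $F$: every sign vector not active at $f$ satisfies $s'\D f<1$ strictly, and these strict inequalities persist on a neighborhood of $f$, so inside the affine subspace $\{g : \D_Z g=0,\ (s^0)'\D g=1\}$ a whole relative neighborhood of $f$ stays within $\Psi_\D$. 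This establishes both that $f\in\text{relint}(F)$ and that the affine hull of $F$ is exactly this $(n-k-1)$-dimensional subspace, whence $\dim F=n-k-1$.

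The main obstacle is the middle step: correctly pinning down the affine hull of the smallest face and, in particular, arguing that the active constraints contribute exactly the equations $\D_Z g=0$ together with one further independent equation, so that the codimension in $\R^n$ is $k+1$ rather than $\abs{Z}+1$. Getting the bookkeeping between the number of vanishing rows and their rank right is the crux, and the nonvanishing of $(s^0)'\D$ on $\N(\D_Z)$ --- witnessed by $f$ itself --- is precisely what produces the final ``$-1$''.
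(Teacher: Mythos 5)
Your proof is correct, but there is nothing in the paper to compare it against line by line: Lemma \ref{l33} is stated as Claim 1 of \cite{elad07} and is not proven in the paper; the only supporting material is the informal discussion of the normal vector $\nu(f)=\D'\sgn(\D f)$ and its discontinuities, which is the geometric route taken in \cite{elad07}. Your argument is a genuinely different, purely polyhedral one, and it checks out at every step: the $H$-representation $\Psi_\D=\bigcap_{s\in\{-1,+1\}^m}\left\{g:(\D's)'g\le 1\right\}$ (full-dimensional and bounded because full column rank makes $g\mapsto\1norm{\D g}$ a norm); the identification of activity of the constraint indexed by $s$ with sign agreement off $Z=\{i:\D_i f=0\}$; the reduction of all active equalities to $\D_Z g=0$ together with the single equation $(s^0)'\D g=1$, by differencing sign vectors that flip one coordinate of $Z$ at a time; the persistence of the strict inactive inequalities, which puts $f$ in the relative interior of the face and identifies its affine hull; and the dimension count $\dim\N(\D_Z)-1=n-k-1$, where the ``$-1$'' comes from $(s^0)'\D$ not vanishing on $\N(\D_Z)$, witnessed by $f$ itself since $(s^0)'\D f=\1norm{\D f}=1$. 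Compared with the normal-vector sketch, your approach trades geometric intuition for rigor: it makes explicit exactly the bookkeeping (rank of the vanishing rows versus their number) that the subdifferential picture leaves implicit, and it is stated in a form that transfers directly to the rank-deficient adaptation used in the proof of Theorem \ref{t01s4} in Appendix \ref{appC}, where the same argument is run inside $\text{rowspan}(\D)$ after splitting off $\N(\D)$.
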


Lemma \ref{l33} gives us a recipe to obtain the vertices of $\Psi_\D$:
\begin{enumerate}
\item Choose $(n-1)$ linearly independent rows in $\D$;
\item Determine a vector in their 1-dimensional nullspace $v\in\R^n$;
\item Normalize $v$ s.t. $\1norm{\D v}=1$. This normalization yields two antipodal vertices.
\item Select one of these two antipodal vertices.
\end{enumerate}

\begin{note}
The number of vertices of $\Psi_\D$ is equal to twice  the number of possibilities to choose $(n-1)$ linearly independent rows in $\D$.
\end{note}

\medskip

Next we consider the synthesis defining polytope. 

\begin{lemma}[Claim 2 in \cite{elad07}]\label{l24}
The (canonical) synthesis defining polytope is obtained as the convex hull of the columns of $\pm \X$.
\end{lemma}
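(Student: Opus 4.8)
The plan is to exploit two elementary facts: that the image of a convex hull under a linear map is the convex hull of the images, and that the unit $\ell^1$-ball is a cross-polytope. By the definition given just above the statement, $\Phi_\X = \X\{\beta \in \R^p : \1norm{\beta} \le 1\}$, so writing $B_1 := \{\beta \in \R^p : \1norm{\beta} \le 1\}$ and letting $\X_j$ denote the $j^{\text{th}}$ column of $\X$, the claim reduces to the set identity $\X B_1 = \text{conv}\{\pm \X_1, \ldots, \pm \X_p\}$.

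First I would establish the cross-polytope identity $B_1 = \text{conv}\{\pm e_1, \ldots, \pm e_p\}$, where $e_1, \ldots, e_p$ are the canonical basis vectors of $\R^p$. The inclusion $\supseteq$ is immediate, since each $\pm e_j$ lies in $B_1$ and $B_1$ is convex. For $\subseteq$, any $\beta$ with $\1norm{\beta} \le 1$ can be written as
$$ \beta = \sum_{j=1}^p \abs{\beta_j}\,\big(\sgn(\beta_j)\, e_j\big) + \big(1 - \1norm{\beta}\big)\cdot 0, $$
a convex combination of the vectors $\sgn(\beta_j) e_j \in \{\pm e_j\}$ together with the origin; since $0 = \tfrac12 e_1 + \tfrac12(-e_1)$ is itself a convex combination of $\pm e_1$, the vector $\beta$ lies in $\text{conv}\{\pm e_j\}$.

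Second, I would invoke that linearity of $\X$ gives $\X\big(\text{conv}(S)\big) = \text{conv}(\X S)$ for any set $S \subseteq \R^p$, because $\X$ sends a convex combination $\sum_i \alpha_i s_i$ to the convex combination $\sum_i \alpha_i (\X s_i)$ with the same weights (and conversely). Applying this with $S = \{\pm e_1, \ldots, \pm e_p\}$ and using $\X(\pm e_j) = \pm \X_j$ yields
$$ \Phi_\X = \X B_1 = \X\big(\text{conv}\{\pm e_j\}_{j=1}^p\big) = \text{conv}\{\pm \X_j\}_{j=1}^p, $$
which is exactly the convex hull of the columns of $\pm \X$.

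There is no real obstacle here; the only point demanding a little care is the treatment of the origin in the cross-polytope identity, so that coefficient vectors with $\1norm{\beta} < 1$ (not just those on the boundary) are also captured. This is dispatched by expressing $0$ as a convex combination of a basis vector and its negative, after which the computation is purely formal.
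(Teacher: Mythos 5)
Your proof is correct. Note that the paper itself gives no proof of this lemma: it is quoted as Claim 2 from \cite{elad07}, and no argument for it appears in any of the appendices, so there is no in-paper proof to compare against. Your two-step argument --- the cross-polytope identity $\{\beta : \1norm{\beta}\le 1\} = \text{conv}\{\pm e_1,\ldots,\pm e_p\}$ followed by the fact that a linear map commutes with taking convex hulls --- is the standard route and is essentially the argument of the original reference; the one point that genuinely needs care, namely that points with $\1norm{\beta} < 1$ (not just boundary points) land in the convex hull, is handled correctly by writing the origin as $\tfrac12 e_1 + \tfrac12(-e_1)$.
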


\begin{Coro}[Corollary 1 in \cite{elad07}]\label{c25}
Let $\X_k$ be a columnn of $\X$ that can be obtained as a convex combination of $\pm \X_{-k}$. Then the synthesis problem with dictionary $\X$ and  the synthesis problem with dictionary $\X_{-k}$ are equivalent.
\end{Coro}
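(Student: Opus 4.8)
The plan is to treat this as a genuine corollary of Lemma~\ref{l24}: since that lemma identifies the synthesis estimator's behaviour entirely through the canonical synthesis defining polytope, it suffices to show that the polytope is unaffected by deleting the column $\X_k$. By Lemma~\ref{l24} the defining polytope of the dictionary $\X$ is $\Phi_\X=\text{conv}\{\pm\X_1,\dots,\pm\X_p\}$, while that of $\X_{-k}$ is $\Phi_{\X_{-k}}=\text{conv}\{\pm\X_j: j\neq k\}$. So the first and central step is to prove the polytope identity $\Phi_\X=\Phi_{\X_{-k}}$.

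For this step I would combine the hypothesis with the central symmetry of the polytope. By assumption $\X_k$ is a convex combination of the points $\pm\X_{-k}$, i.e. $\X_k\in\text{conv}\{\pm\X_j: j\neq k\}=\Phi_{\X_{-k}}$. Because $\Phi_{\X_{-k}}$ is centro-symmetric about the origin, it then also contains $-\X_k$. Hence both $\pm\X_k$ already lie in $\Phi_{\X_{-k}}$, and adjoining them to the generating set cannot enlarge the convex hull: $\Phi_\X=\text{conv}\bigl(\Phi_{\X_{-k}}\cup\{\pm\X_k\}\bigr)=\Phi_{\X_{-k}}$. I would also record here that the same hypothesis forces $\X_k\in\text{colspan}(\X_{-k})$, so the two dictionaries have the identical column span, which matters for the next step.

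It remains to convert the equality of defining polytopes into equivalence of the estimators. I would rephrase the synthesis problem as an optimization over the fitted value $f$ in the common column span: setting $g_\X(f):=\min\{\1norm{\beta}:\X\beta=f\}$, the synthesis estimator is the minimizer of $\nnorm{Y-f}^2+2\lambda\,g_\X(f)$ over that span. The function $g_\X$ is precisely the Minkowski gauge of $\Phi_\X=\X\{\beta:\1norm{\beta}\le1\}$; geometrically this is the statement, already used in this section, that the estimator is obtained by inflating the canonical polytope until it meets the ball of radius $a$ around $Y$. Since $\Phi_\X=\Phi_{\X_{-k}}$ and the column spans agree, we get $g_\X=g_{\X_{-k}}$ as functions on the common span, so the two reformulated problems coincide and share the minimizer $\hat f_{\text{S}}$.

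I expect the main obstacle to be this last step, namely pinning down that the invariant object is the fitted value $\hat f_{\text{S}}$ rather than the coefficient vector, whose dimension and value necessarily differ between the two problems; ``equivalence'' must therefore be read as equality of the synthesized signals, and the cleanest bridge is the identification of the penalty with the gauge of the defining polytope. If one prefers to bypass the gauge machinery, the same conclusion can be reached by an explicit folding of coefficients: writing the hypothesis as $\X_k=\sum_{j\neq k}w_j\X_j$ with $\1norm{w}\le1$, any $\beta$ yields $\gamma:=\beta_{-k}+\beta_k w$ with $\X_{-k}\gamma=\X\beta$ and $\1norm{\gamma}\le\1norm{\beta}$, while conversely any $\gamma$ embeds by inserting a zero in coordinate $k$; matching these feasible points shows the two problems attain the same optimal value at the same fitted signal.
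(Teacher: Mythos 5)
Your proposal is correct, but note that the paper itself contains no proof of Corollary~\ref{c25}: it is quoted verbatim as Corollary~1 of \cite{elad07}, and Appendix~\ref{appB} only proves Lemmas~\ref{l31}, \ref{l311} and \ref{l32}. So there is no in-paper argument to compare against; what you have written fills a gap that the paper delegates to the cited reference. Your two-stage argument is sound: the polytope identity $\Phi_\X=\Phi_{\X_{-k}}$ follows exactly as you say (the hypothesis places $\X_k$ in $\Phi_{\X_{-k}}$, centro-symmetry places $-\X_k$ there too, and adjoining points of a convex set to its generating family leaves the hull unchanged), and this is in the spirit of the geometric reasoning of \cite{elad07}, where a column lying in the convex hull of the others is not a vertex of the defining polytope and hence irrelevant to the inflation picture. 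Your second stage is the part that the section's informal geometry leaves implicit, and you handle it correctly: identifying the partially minimized penalty $g_\X(f)=\min\{\1norm{\beta}:\X\beta=f\}$ with the Minkowski gauge of $\Phi_\X$ on the common column span (the spans agree since $\X_k\in\text{colspan}(\X_{-k})$) makes precise that ``equivalence'' means equality of the fitted values $\hat f_{\text{S}}$, not of coefficient vectors. The coefficient-folding argument you give as an alternative ($\gamma=\beta_{-k}+\beta_k w$ with $\1norm{\gamma}\le\1norm{\beta}$, and the zero-insertion embedding in the other direction) is in fact the most self-contained route, since it bypasses Lemma~\ref{l24} and the gauge machinery entirely and works verbatim for both the penalized and the constrained formulations used in Subsection~\ref{ss26}; either version of your argument would serve as a complete proof here.
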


\medskip

From the considerations on the analysis and synthesis problems in the overdetermined case the following Theorem \ref{t26} follows.

\begin{theorem}[Theorem 4 in \cite{elad07}]\label{t26}
For each $\ell^1$-penalized problem with analysis operator $\D\in\R^{m\times n}, m>n$ of full rank $n$ there exists a dictionary $\X=\X(\D)$ describing an equivalent synthesis problem. The reverse is not true.
\end{theorem}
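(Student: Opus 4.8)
The plan is to settle both halves of the statement through the geometry of the canonical defining polytopes developed above, using the principle that two $\ell^1$-penalized problems are equivalent precisely when their defining polytopes coincide as subsets of $\R^n$. Recall that for full-rank $\D$ the estimator $\hat{f}_{\text{A}}(a)$ is produced by inflating $\Psi_\D$ until it first touches the sphere $\{f:\nnorm{Y-f}=a\}$, and $\hat{f}_{\text{S}}(a)$ likewise by inflating $\Phi_\X$. Hence if $\Psi_\D=\Phi_\X$ the two inflation processes agree at every radius $a$, so the estimators coincide for all $a$, equivalently for all $\lambda$. The whole problem thus reduces to matching polytopes.

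For the existence half I would build $\X$ directly from the vertices of $\Psi_\D$. Since $\D$ has full column rank $n$, the map $f\mapsto\1norm{\D f}$ is a genuine norm and $\Psi_\D$ is a bounded, centro-symmetric $n$-polytope. Applying the recipe following Lemma \ref{l33}, I run over every choice of $n-1$ linearly independent rows of $\D$, take a null vector $v$ of the selected rows, and normalize it so that $\1norm{\D v}=1$; this enumerates all vertices of $\Psi_\D$ in antipodal pairs. Collecting one representative $\X_j$ from each antipodal pair as the columns of $\X=\X(\D)$, I observe that, because a polytope is the convex hull of its vertices and $\Psi_\D$ is centro-symmetric, the convex hull of $\pm\X_1,\dots,\pm\X_p$ equals $\Psi_\D$. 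By Lemma \ref{l24} that convex hull is exactly the synthesis defining polytope $\Phi_\X$, so $\Phi_\X=\Psi_\D$ and the two problems are equivalent; Corollary \ref{c25} shows this choice of atoms is canonical up to discarding redundant ones.

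For the converse I would first pin down which polytopes can appear on the analysis side. On any facet of $\Psi_\D$ the sign pattern $s=\sgn(\D f)\in\{-1,+1\}^m$ is constant, so the facet lies in $\{f:\langle\D's,f\rangle=1\}$ and its outward normal is $\D's=\sum_i s_i\D_i$. Thus every facet normal of an analysis polytope is a signed sum of all rows of $\D$; equivalently the polar $\Psi_\D^\circ$ is the zonotope generated by $\pm\D_1,\dots,\pm\D_m$. Analysis polytopes are therefore polars of zonotopes, a strictly smaller family than all centro-symmetric polytopes once $n\ge 3$. It then suffices to exhibit a synthesis polytope outside this family: take $\Phi_\X$ to be the cube in $\R^3$, realized via Lemma \ref{l24} as the convex hull of $\pm\X_1,\dots,\pm\X_4$ with the $\X_j$ the four non-antipodal vertices of the cube. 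Its polar is the octahedron, whose facets are triangles and hence not centrally symmetric, so the octahedron is not a zonotope and the cube is not an analysis polytope. No full-rank $\D$ can yield $\Psi_\D=\Phi_\X$, so this synthesis problem admits no equivalent analysis form.

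The routine part is the existence half, which is bookkeeping with Lemmas \ref{l33} and \ref{l24}. The main obstacle is the converse: one must correctly characterize analysis polytopes through their facet normals (the zonotope-polar description) and then verify that a concrete candidate genuinely escapes this class. An alternative to the explicit counterexample is a degrees-of-freedom count, since an operator in $\R^{m\times n}$ carries only $mn$ parameters yet would have to reproduce the roughly $n\binom{m}{n-1}$ vertex coordinates forced by the recipe, so a generic synthesis polytope with that many vertices cannot be matched; but I would favor the zonotope-polar argument because it is fully explicit.
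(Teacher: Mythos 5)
Your proposal is correct, and its existence half is exactly the paper's route: the paper presents Theorem \ref{t26} as following ``from the considerations'' of Section \ref{ss26}, namely enumerate the vertices of $\Psi_\D$ by the recipe attached to Lemma \ref{l33}, keep one vertex per antipodal pair as dictionary atoms, and invoke Lemma \ref{l24} and Corollary \ref{c25} so that $\Phi_\X=\Psi_\D$; your first two paragraphs are this argument spelled out. Where you genuinely depart from the paper is the converse. The paper gives no explicit proof of ``the reverse is not true'' (there is none in Appendix \ref{appB}), and in \cite{elad07} this direction rests on a degrees-of-freedom/genericity count of the kind you mention as an alternative at the end. Your zonotope-polar argument is sharper and fully explicit: identifying the facet normals of $\Psi_\D$ as $\D'\sgn(\D f)$ shows $\Psi_\D^\circ$ is the zonotope generated by $\pm\D_1,\dots,\pm\D_m$, which reduces the converse to the classical fact that every face of a zonotope is centrally symmetric; the cube/octahedron pair in $\R^3$ then yields a concrete synthesis problem (four non-antipodal cube vertices as atoms, a full-row-rank $\X\in\R^{3\times 4}$) admitting no equivalent analysis formulation. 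This buys an actual counterexample rather than a genericity claim. One small imprecision to fix: equivalence of the estimators forces the defining polytopes to agree only up to a positive scaling (multiplying the penalty by $c>0$ changes no argmin), so your matching principle should read ``coincide up to scaling''; this is harmless for your argument, since a scaled copy of a polar of a zonotope is again a polar of a zonotope, so the cube still cannot equal any $c\,\Psi_\D$.
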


\begin{remark}
The equivalent synthesis problem can be obtained by taking the vertices of the analysis defining polytope (one for each antipodal pair) and setting them as dictionary atoms.
\end{remark}

\section{How to go from the analysis to the synthesis form}\label{s4}
In this section we expose a recipe to derive a synthesis form of a general analysis estimator. This recipe is developed based on \cite{elad07} and is more general than their, since it can handle also analysis operators not being of full rank.

\subsection{Interpretation of the matrix inverse}\label{s41}

Let $\D\in \R^{n \times n}$ have rank $n$. We want to obtain the matrix inverse of $\D$.
For $i \in [n]$ we do:
\begin{enumerate}
\item Find $v_i \in \N(\D_{-\{i\}})\setminus \{0\} \subset \R^n$. (Note that $d_jv_i=0, \forall j \not= i$).
\item Normalize $v_i$, s.t. $d_iv_i=1$, to obtain $v^*_i= \frac{v_i}{d_i v_i}$. (This corresponds to setting $v^*_i= \frac{\sgn(d_iv_i)v_i}{\norm{\D v_i}_1}$.)
\end{enumerate}

Let $V^*=(v^*_1, \ldots, v^*_n)$. Then $\D V^*= \text{I}_n$, i.e. $V^*$ is a right inverse of $\D$ and thus $V^*= \D^{-1}$.

We thus see that the method proposed by \cite{elad07} for the overdetermined case amounts to taking all the invertible submatrices of $\D \in \R^{m \times n}, m >n, \rank(\D)=n$ and invert them.  Let us call $\tilde{\X}\in \R^{n \times \tilde{p}}$ the matrix collecting all the inverses. We normalize the columns of $\tilde{\X}$, s.t. $\norm{\D\tilde{x}_i}_1=1$. Then we prune $\tilde{\X}$ according to \cite{elad07}, i.e. we remove a column $\tilde{\X}_k$ of $\tilde{\X}$ if it is in the convex hull of $\{\pm \tilde{\X}_j\}_{j \not=k}$.
We denote by $\X \in \R^{p \times n}$ the pruned dictionary.

\subsection{General recipe to pass from the analysis to the synthesis form}

\begin{theorem}\label{t01s4}
Let $\D \in \R^{m \times n}$ be a matrix with $ \rank(\D)=r \le \min \{m,n\}$. 

Let $T_i$ denote a set of row indices of $\D$, s.t. $\abs{T_i}=r$ and $\rank(\D_{T_i})=r$. 

We need to go through the following steps to find the dictionary for an equivalent synthesis problem, corresponding to the analysis problem with analysis operator $\D$.

\begin{enumerate}
\item Find all possible $T_i$'s.
\item Find $n-r$ rows spanning $\N(\D)= \N(\D_{T_i}), \forall i$. Write them in the matrix $A \in \R^{(n-r)\times n}$.
\item Invert $B_i= \begin{pmatrix} A \\ \D_{T_i} \end{pmatrix}$ to find $ B_i^{-1}= \begin{pmatrix} J & \tilde{\X}^i \end{pmatrix}, \tilde{\X}^i \in \R^{n \times r}$.
\item Write $ \tilde{\X}= \{\tilde{\X}^i \}_{i }$.
\item Normalize the columns of $ \tilde{\X}$ to obtain the normalized dictionary $\X$, s.t.  the columns of $\D\X$ have $\ell^1$-norm equal to 1.
\item Prune the dictionary $\X$ according to \cite{elad07}, i.e. discard a columns with index $k$ if $\X_k$ is in the convex hull of $\pm \X_{-k}$.
\item Obtain the dictionary $\X$.
\end{enumerate}

\end{theorem}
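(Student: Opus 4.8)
The plan is to decompose the problem along $\N(\D)$ and $\N^\perp(\D)=\text{rowspan}(\D)$, peeling off the kernel component exactly as in Lemma \ref{l31}, and then to recognize the remaining full-column-rank problem on $\text{rowspan}(\D)$ as an instance of the overdetermined theory of Section \ref{s3}. First I would write $f=A_\D f+\Pi_\D f$, where $\Pi_\D=\Pi_{\text{rowspan}(\D)}$ and $A_\D=\text{I}_n-\Pi_\D$ is the projection onto $\N(\D)$. Since $\norm{\D f}_1$ depends only on $\Pi_\D f$ and the quadratic loss splits orthogonally, minimizing over the component $A_\D f\in\N(\D)$ yields $A_\D Y$, so that
\begin{equation*}
\hat{f}_{\text{A}}=A_\D Y+\hat{g},
\end{equation*}
where $\hat{g}$ solves the analysis problem restricted to the $r$-dimensional space $\text{rowspan}(\D)$ with effective observation $\Pi_\D Y$. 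On this subspace $\D$ acts injectively (rank $r$ on an $r$-dimensional domain), so we are in the square ($m=r$) or overdetermined ($m>r$) full-column-rank situation to which Theorem \ref{t26} applies.

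Next I would verify that the columns produced by the recipe are exactly the candidate vertices of the canonical analysis polytope $\Psi_\D$ of this reduced problem. From $B_iB_i^{-1}=\text{I}_n$ with $B_i=\begin{pmatrix}A\\ \D_{T_i}\end{pmatrix}$ and $B_i^{-1}=\begin{pmatrix}J & \tilde{\X}^i\end{pmatrix}$ one reads off $A\tilde{\X}^i=0$ and $\D_{T_i}\tilde{\X}^i=\text{I}_r$, while $\D_{T_i}J=0$ forces $\text{colspan}(J)=\N(\D_{T_i})=\N(\D)$. The first identity places every column of $\tilde{\X}^i$ in $\N(A)=\text{rowspan}(\D)$, and the second says column $j$ is orthogonal to all rows of $T_i$ except the $j$-th. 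Invoking Lemma \ref{l22} with the $A$-block as the unpenalized part and the $\D_{T_i}$-block as its complement identifies $\tilde{\X}^i=\D_{T_i}^{+}$, the Moore-Penrose pseudoinverse of $\D_{T_i}$; in particular the atoms automatically lie in $\text{rowspan}(\D)$, which is what makes them compatible with the reduction of the first paragraph. By Lemma \ref{l33} (applied within $\text{rowspan}(\D)$), each such column, once normalized so that $\norm{\D\tilde{\X}^i_j}_1=1$, is a vertex direction of $\Psi_\D$.

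I would then argue that ranging over all admissible $T_i$ yields \emph{every} vertex of $\Psi_\D$: a vertex corresponds to an $(r-1)$-subset $S$ of linearly independent rows whose one-dimensional annihilator inside $\text{rowspan}(\D)$ is the vertex direction, and since $\rank(\D)=r$ any such $S$ extends to a rank-$r$ index set $T_i=S\cup\{j\}$, so that the vertex appears as $\tilde{\X}^i_j$. Collecting the $\tilde{\X}^i$, normalizing, and pruning columns lying in the convex hull of $\pm$ the remaining ones is justified by Corollary \ref{c25} and leaves precisely the vertices of $\Psi_\D$, one per antipodal pair. By Lemma \ref{l24} the synthesis defining polytope $\Phi_\X=\text{conv}(\pm\X)$ then coincides with $\Psi_\D$; since the two defining polytopes agree and the constraint regions agree by full column rank, the analysis and synthesis estimators coincide on $\text{rowspan}(\D)$, i.e. $\hat{g}=\hat{f}_{\text{S}}$. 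Combined with the first paragraph this gives the desired equivalence, $\hat{f}_{\text{A}}=\hat{f}_{\text{S}}+A_\D Y$.

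The main obstacle I anticipate is the exact matching in the third paragraph between the algebraically generated columns $\{\tilde{\X}^i_j\}$ and the geometric vertex set of $\Psi_\D$: one must show both that no vertex is missed (every independent $(r-1)$-subset extends to some valid $T_i$, which is where $\rank(\D)=r$ enters) and that the columns discarded by pruning are exactly the non-vertices, i.e. that the convex-hull criterion of Corollary \ref{c25} coincides with failing to define a face in Lemma \ref{l33}. The delicate point is the bookkeeping needed to keep everything inside $\text{rowspan}(\D)$ while still penalizing all rows of $\D$ (not merely those of a single $T_i$); once the vertex enumeration is pinned down, the equivalence follows from the already-established machinery of Section \ref{s3}.
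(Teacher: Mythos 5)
Your proposal is correct and follows essentially the same route as the paper's proof: the same orthogonal decomposition into the least-squares component on $\N(\D)$ and the reduced analysis problem on $\text{rowspan}(\D)$, the same identification of the normalized columns of $B_i^{-1}$ as vertices of the restricted polytope via Lemma \ref{l33} and the matrix-inverse interpretation (your Lemma \ref{l22} step makes this explicit), and the same appeal to the overdetermined machinery of Section \ref{s3} to conclude. Your write-up is in fact somewhat more detailed than the paper's sketch---notably the explicit completeness argument that every vertex arises from some $T_i$, and the observation that generated columns automatically lie in $\text{rowspan}(\D)$ so that pruning only removes duplicates---but this fills in the same argument rather than taking a different one.
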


\begin{proof}
See Appendix \ref{appC}
\end{proof}

\begin{Coro}\label{c01s4}
Let $\X$ and $J$ be obtained according to Theorem \ref{t01s4}. Then
\begin{eqnarray*}
\hat{f}_{\text{A}}&=& \arg \min_{f \in \R^n} \{\norm{Y-f}^2_n+2 \lambda \norm{\D f}_1 \}\\
&=& \begin{bmatrix} J & \X \end{bmatrix} \arg \min_{\beta \in \R^p} \left\{ \norm{Y-\begin{bmatrix} J & \X \end{bmatrix} \beta }^2_n + 2 \lambda \norm{\beta_{-[n-r]}}_1   \right\}.
\end{eqnarray*}
\end{Coro}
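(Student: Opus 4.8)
The plan is to peel off the kernel of $\D$ by an orthogonal decomposition of $\R^n$, which turns both the analysis and the synthesis problem into $r$-dimensional problems living on $W := \N^{\perp}(\D) = \text{rowspan}(\D)$, and then to invoke the full-rank equivalence already established in Theorem \ref{t01s4} (via Theorem \ref{t26} and Lemma \ref{l32}). First I would record the block structure forced by $B_i B_i^{-1} = \text{I}_n$: reading off the four blocks of $\begin{bmatrix} A \\ \D_{T_i}\end{bmatrix}\begin{bmatrix} J & \tilde{\X}^i\end{bmatrix}$ gives $\D_{T_i} J = 0$, $A\tilde{\X}^i = 0$, and $\D_{T_i}\tilde{\X}^i = \text{I}_r$. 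Since $\rank(\D_{T_i}) = \rank(\D) = r$, the rows of $\D_{T_i}$ span $W$, whence $\N(\D_{T_i}) = \N(\D)$; thus the columns of $J$ lie in $\N(\D)$ and, as $AJ = \text{I}_{n-r}$ forces $\rank(J) = n-r$, they form a basis of $\N(\D)$. Dually, $A\tilde{\X}^i = 0$ (the rows of $A$ spanning $\N(\D)$) places every column of $\tilde{\X}$, and hence of the normalized and pruned $\X$, in $\N^{\perp}(\D) = W$. In particular $\D J = 0$, so that $\D\begin{bmatrix} J & \X\end{bmatrix}\beta = \D\X\beta_{-[n-r]}$ and the penalty only sees the $W$-component.

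Next I would carry out the decomposition on both sides. For the analysis problem, writing $f = f_0 + g$ with $f_0 \in \N(\D)$ and $g \in W$, orthogonality of $\N(\D)$ and $W$ together with $\D f = \D g$ gives $\norm{Y - f}_n^2 + 2\lambda\norm{\D f}_1 = \norm{\Pi_{\N(\D)}Y - f_0}_n^2 + \norm{A_{\N(\D)}Y - g}_n^2 + 2\lambda\norm{\D g}_1$. Minimizing over $f_0 \in \N(\D)$ sets $f_0 = \Pi_{\N(\D)}Y$ and annihilates the first term, leaving the reduced analysis problem $\min_{g \in W}\{\norm{A_{\N(\D)}Y - g}_n^2 + 2\lambda\norm{\D g}_1\}$ with $\hat{f}_{\text{A}} = \Pi_{\N(\D)}Y + \hat{g}$. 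Running the identical computation on the synthesis side with $f = J\beta_{[n-r]} + \X\beta_{-[n-r]}$, the first block lies in $\N(\D)$ and the second in $W$; minimizing over $\beta_{[n-r]}$ reproduces $J\hat{\beta}_{[n-r]} = \Pi_{\N(\D)}Y$ (possible because the columns of $J$ are a basis of $\N(\D)$) and leaves the reduced synthesis problem $\min_{\alpha}\{\norm{A_{\N(\D)}Y - \X\alpha}_n^2 + 2\lambda\norm{\alpha}_1\}$.

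It then remains to identify the two reduced problems, both of which live on the $r$-dimensional space $W$ on which $\D$ acts with full column rank $r$ and $m \ge r$ rows. This is exactly the overdetermined (or, if $m = r$, square) full-rank situation of Theorem \ref{t26} and Lemma \ref{l32}, and Theorem \ref{t01s4} guarantees that the columns of $\X$ are precisely the vertices (one per antipodal pair, after pruning) of the canonical analysis defining polytope of $\D$ restricted to $W$, normalized so that $\norm{\D\X_j}_1 = 1$. Hence the reduced analysis and synthesis problems coincide, giving $\hat{g} = \X\hat{\alpha}$ and therefore $\hat{f}_{\text{A}} = \Pi_{\N(\D)}Y + \X\hat{\alpha} = \begin{bmatrix} J & \X\end{bmatrix}\hat{\beta}$, which is the claimed identity. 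To present it in the stated form, carrying $J$ as an unpenalized block rather than substituting $\Pi_{\N(\D)}Y$, I would invoke Lemma \ref{l21} with $U = [n-r]$: here the unpenalized design block is $J$ with $\text{colspan}(J) = \N(\D)$, so $A_U = A_{\N(\D)} = \Pi_W$ and $A_U\X = \X$ because the columns of $\X$ already lie in $W$, confirming that keeping $J$ unpenalized in the design does not alter the fit.

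The main obstacle I anticipate is the identification step of the third paragraph. One must verify carefully that the construction in Theorem \ref{t01s4} --- restricting to the invertible submatrices $\D_{T_i}$, inverting the augmented matrices $B_i$, discarding the common kernel block $J$, then normalizing and pruning --- really does reproduce the canonical analysis-polytope vertices of $\D$ viewed as a full-column-rank map on the $r$-dimensional space $W$, so that the earlier full-rank equivalence transfers verbatim. The delicate point is the bookkeeping between the ambient $\R^n$ coordinates and the effective $r$-dimensional problem on $W$, in particular checking that the $\ell^1$ normalization $\norm{\D\X_j}_1 = 1$ is the correct normalization for the restricted problem and that pruning removes exactly the columns redundant on $W$. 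Once this is in place, the nullspace reduction and the unpenalized-block reformulation are routine consequences of the orthogonal decomposition and Lemma \ref{l21}.
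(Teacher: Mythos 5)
Your proposal is correct and follows essentially the same route as the paper: the paper's one-line proof ("reverse the calculations at the beginning of the proof of Theorem \ref{t01s4}") refers precisely to the orthogonal decomposition along $\N(\D)$ and $\N^{\perp}(\D)$ that you carry out explicitly on both the analysis and synthesis objectives, followed by identifying the two reduced problems on $\text{rowspan}(\D)$ via the equivalence that Theorem \ref{t01s4} establishes. Your added bookkeeping (the block identities from $B_iB_i^{-1}=\text{I}_n$, that $J$ spans $\N(\D)$ and the columns of $\X$ lie in $\N^{\perp}(\D)$, and the optional appeal to Lemma \ref{l21}) only makes explicit what the paper leaves implicit.
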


\begin{proof}[Proof of Corollary \ref{c01s4}]
Corollary \ref{c01s4} follows by reversing the calculations made at the beginning of the proof of Theorem \ref{t01s4}.
\end{proof}

\begin{remark}
An advantage of Theorem \ref{t01s4} with respect to the theory exposed by \cite{elad07} is that it is able to unify the treatment of the underdetermined, the square invertible and the overdetermined cases in one single result.
\end{remark}

\begin{note}
Lemma \ref{l21} tells us that in the penalized part of the dictionary, i.e. in  $\X$, we can actually add to the dictionary atoms arbitrary quantities in the column span of $J$ (i.e. in the nullspace $\N(\D)$) without changing the prediction properties of the estimator.
\end{note}

\section{Analysis versus synthesis in first order total variation regularization}\label{s5}

In this section which implications the insights exposed in Section \ref{s3} and further developed in Section \ref{s4} have for a class of analysis estimators of great interest in the current statistical literature: the first order total variation regularized estimators over graphs, see for instance
\cite{hutt16,wang16,dala17, gunt17,vand18, orte18}.

Let $\vec{G}=(V,E), \abs{V}=n, \abs{E}=m$ be a directed graph. Recall that by $D_{\vec{G}}\in \R^{m \times n}$ we denote its incidence matrix. In the following we omit the subscript and just write $D\in \R^{m \times n}$.

Until now we exposed some theory for a general analysing operator $\D$. From now on we identify $\D$ with the incidence matrix $D_{\vec{G}}$  of a directed  graph $\vec{G}$, i.e. we choose $\D=D_{\vec{G}}=D$. We want to investigate which consequences this choice of the analysing operator has for the resulting dictionary $\X$. We denote the resulting dictionaries with the capital letter $X$.

We focus onto connected graphs. The extension of the considerations to follow to graphs with more than one connected component is  straightforward, since the considerations can be applied to each connected component separately.

The first question arising is, when which of the three cases of an analyzing operator (underdetermined, invertible and overdetermined) is relevant.

\begin{note}
It is widely known that if $D \in \R^{m \times n}$ is the incidence matrix of a connected graph, then $\rank(D)=n-1, \forall m \ge n-1$.
Therefore, the case of an invertible analysing operator does never appear in the context of  first order total variation regularized estimators over graphs. Indeed, an incidence matrix only computes edge differences of a signal and  is thus oblivious of its mean level.
\end{note}
We are now going to consider two cases:
\begin{itemize}
\item Connected tree graphs;

\item Connected non-tree graphs.

\end{itemize}

\subsection{Tree graphs}

\begin{definition}[Tree graph]\label{d01s5}
A tree graph is a connected graphs having no cycles.
\end{definition}

\begin{remark}
Definition \ref{d01s5} is equivalent to saying that a tree graph is a  connected graph with $n$ vertices and $n-1$ edges. This implies that the incidence matrix $D$ of a tree graph is s.t. $D\in\R^{(n-1)\times n}$ has rank $n-1$ and $\text{ker}(D)=\text{span}(1_n)$.
\end{remark}

\begin{Coro}\label{c01s5}
We have that

$$ \hat{f}_{\text{A}}= X \arg \min_{\beta\in \R^n} \left\{\norm{Y-X\beta}^2_n + 2 \lambda \norm{\beta_{-1}}_1  \right\},$$
where $X= \begin{bmatrix} A \\ D \end{bmatrix}^{-1}$, with $ A=(1,0, \ldots, 0)\in \R^n$, is the path matrix rooted at vertex 1 of the tree graph considered.
\end{Coro}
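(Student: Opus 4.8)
The plan is to read Corollary \ref{c01s5} as the specialization of the underdetermined case, Lemma \ref{l311}, to the tree incidence matrix, where the rank deficiency is exactly one. For a tree on $n$ vertices we have $D \in \R^{(n-1) \times n}$ with $\rank(D) = n-1$ and $\N(D) = \text{span}(1_n)$, so in the notation of Lemma \ref{l311} we are in the situation $m = n-1$, $u = 1$. It therefore suffices to check that the chosen top row $A = (1, 0, \ldots, 0)$ makes $\tilde{D} := \begin{bmatrix} A \\ D \end{bmatrix}$ invertible, and then to transcribe the conclusion of the lemma.

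First I would verify the hypotheses. The row $A$ has full rank (being a nonzero row), and $\tilde{D}$ is invertible because
$$ \N(\tilde{D}) = \N(A) \cap \N(D) = \N(A) \cap \text{span}(1_n) = \{0\}, $$
where the last equality holds since $A\, 1_n = 1 \neq 0$. With invertibility established, Lemma \ref{l311} applies with $U = \{1\}$ and $X := \tilde{D}^{-1} = \begin{bmatrix} X_U & X_{-U} \end{bmatrix}$, giving directly
$$ \hat{f}_{\text{A}} = X \arg\min_{\beta \in \R^n} \left\{ \norm{Y - X\beta}^2_n + 2\lambda \norm{\beta_{-1}}_1 \right\}, $$
the penalized block being $\beta_{-U} = \beta_{-1}$. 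This is the estimator equivalence, and it is essentially a transcription of the earlier lemma; I expect no difficulty here.

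The remaining, more interpretive task is to justify the name \emph{path matrix rooted at vertex $1$}. I would compute the columns of $X$ by solving $\tilde{D} X = \text{I}_n$ one column at a time. The first column satisfies $D X_1 = 0$ and $A X_1 = 1$, hence $X_1 \in \N(D) = \text{span}(1_n)$ with first entry $1$, forcing $X_1 = 1_n$; this is the unpenalized atom spanning $\N(D)$, matching the $J$-block of Corollary \ref{c01s4}. For $j \ge 2$, the column $X_j$ is the unique signal with $A X_j = 0$ whose difference $D X_j$ equals one on the edge $e_{j-1}$ and zero on every other edge. Invoking the tree structure, deleting $e_{j-1}$ splits the tree into two connected components, and the vanishing of all other differences forces $X_j$ to be constant on each; the root normalization $A X_j = 0$ then pins those constants, so $X_j$ is, up to the orientation of $e_{j-1}$, the indicator of the vertices lying on the far side of $e_{j-1}$ from the root. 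Equivalently, the entry $X_{vj}$ records whether edge $e_{j-1}$ lies on the unique root-to-$v$ path, which is precisely the path matrix.

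The main obstacle is not the estimator equivalence, which is immediate from Lemma \ref{l311}, but the clean bookkeeping of edge orientations in the path-matrix description: one must track the sign of each indicator entry against the chosen edge direction in $D$, and it is exactly here (through acyclicity and the unique-path property) that the tree hypothesis is genuinely used.
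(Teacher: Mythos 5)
Your proof is correct and follows essentially the same route as the paper: the paper's own proof cites Theorem \ref{t01s4} together with Lemmas \ref{l21} and \ref{l22}, whereas you invoke Lemma \ref{l311}, which is the same machinery packaged exactly for the underdetermined full-row-rank case ($m=n-1=\rank(D)$ for a connected tree), so the two reductions coincide in substance. Your invertibility check for $\tilde{D}$ and your identification of the columns of $X$ as the path matrix rooted at vertex $1$ (which the paper carries out in the discussion following the corollary rather than in the proof itself, including the orientation caveat) are both sound.
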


\begin{proof}[Proof of Corollary \ref{c01s5}]
Note that $X_1=1_n$ and by Lemma \ref{l22} $D^+=A_1 X_{-1}$.
Thus, Corollary \ref{c01s5} follows by Theorem \ref{t01s4} combined with Lemms \ref{l21}.
\end{proof}

\begin{remark}
In the literature we find two ways in which $A\in \R^{1 \times n}$ is chosen to build $\tilde{D}$.
\cite{tibs11} propose to choose $A=1_n/n$, which is in the kernel of $D$. The resulting dictionary is then $X= \begin{bmatrix} 1_n & D^+ \end{bmatrix}$.

\cite{qian16} on the opposite side proposes to choose, as we do, $A=(1,0, \ldots, 0)$, s.t. $\begin{bmatrix} A \\ D \end{bmatrix}$ is the rooted incidence matrix at vertex 1 of the (path) graph and $X$ is the path matrix of the (path) graph with reference vertex 1. We prefer this last option, since it is of more intuitive interpretation.

Lemma \ref{l21} tells us that the two forms of the synthesis estimator resulting from these methods are equivalent in terms of prediction,
\end{remark}

\begin{remark}
It is interesting to look at the notion of sparsity for signals supported on tree graphs. When using an approach in the style of \cite{qian16}, the cardinality $s_0$ of the true active set $S_0$ tells us how many jumps there are in the signal. However the piecewise constant regions in $f^0$ are $s^0+1$, exactly as many as the number of coefficients required to express $f^0$ as a linear combination $f^0=X\beta^0$, where $\norm{\beta^0}_0=s_0+1$.
In the sequel indeed we are going to argue that $\norm{\beta^0}_0$ is an appropriate measure of the sparsity of the signal, rather than $\norm{Df^0}_0$.

\end{remark}

By the interpretation of the matrix inverse exposed in Section \ref{s4}, we see that finding the $i^{\text{th}}$ column of $X$ amounts to deleting the $i^{\text{th}}$ row from $\tilde{D}$ and then finding a vector $v_i^*$:
$$ \tilde{D}_{-i}v_i^*=0 \text{ and } \tilde{D}_{i}v_i^*=1.$$

For $i=1$, we see that $v^*_1=1_n$ satisfies these equations.

For $i \in [n]\setminus \{1\}$, the deletion of $\tilde{D}_i$ translates to the tree graph as the deletion of the edge $e_{i-1}\in E$. Since the deletion of an edge from a tree graph causes it to become disconnected, we end up with $D_{-\{i-1\}}$ being the incidence matrix of a graph with two connected components, both of them being tree graphs.

Let $V_1, V_2\subset V, V_1 \cap V_2= \emptyset, V_1 \cup V_2= V$ be the partition of the vertices of the tree graph arising from the deletion of the edge $(i-1)$. In particular, let $V_1$ contain the root of the graph, to which by convention we  assign the index $\{1\}$.

To find $v^*_i$ we first look for a vector s.t. $D_{-\{i-1\}}v^*_i=0$. Such a vector must have two piecewise constant components, i.e. it must be s.t. 

$$(v^*_i)_j=\begin{cases} a, & j \in V_1,\\
b, & j \in V_2.\end{cases}$$
The condition $Av^*_i=0, i \not=1$ gives $a=0$, while the condition $D_{i-1}v_i^*=1$ gives $b=1$.

Thus the dictionary $X_{-1}\in \R^{(n-1)\times n}$ contains all the ways to partition $V$ into two sets by cutting an edge of the tree graph $\vec{G}$. The vertices of the element of the partition containing the root will then get value zero and the vertices of the other element of the partition will get value one.

\begin{note}
For a tree graph, all these possible partitions can also be seen as all the possible ways to select $(n-2)$ (linearly independent) rows from $D$, i.e. all the ways to discard a row from $D$.
\end{note}

\subsection{Non tree graphs}
We are now going to consider the case of connected graphs not being trees, i.e. containing cycles. We want to find an equivalent synthesis formulation of the total variation regularized estimator on a general connected nontree graph $\vec{G}$.

\begin{note}
At first sight, this case could look like a purely overdetermined case. However, it shares features of both the underdetermined and the overdetermined case. Indeed, the incidence matrix $D$ of a graph with $n$ vertices containing some cycles has at least $n$ rows but is of rank $n-1$. Thus, the incidence matrices of this category of graphs have both the property of neglecting some information about the signal and the property of delivering some redundant information. This means that we have to combine the theory by \cite{elad07} for the overdetermined case  and the underdetermined case to find an equivalent analysis estimator, as done in Section \ref{s4} in Theorem \ref{t01s4}.
\end{note}

The intuition developed for tree graphs allows us now to handle the case of  connected nontree graphs more easily. Connected nontree graphs are graphs s.t. $m \ge n$ and thus their incidence matrix, which is of rank $n-1$, is not of full rank.

The first step in the recipe given by \ref{t01s4} is to find all sets $T_i \subset[m], \abs{T_i}=n-1$, s.t. $\rank(D_{T_i})=n-1$.
This means that we have to find all possible sets of edges in $D$ forming a connected graph with $n$ vertices. These sets of edges will thus define tree graphs. Let us introduce the notion of spanning tree.

\begin{definition}[Spanning tree]\label{d02s5}
Let $\vec{G}=(V,E)$ be a connected graph. The subgraph $\vec{T}=(V,E')$ is a spanning tree if $E'\subseteq E$ is the maximal set of edges containing no cycle.
\end{definition}

Therefore, thanks to Definition \ref{d02s5}, $T_i$ acquires the meaning of spanning tree. Thus, $\forall T_i$, we can apply the procedure developed in the previous subsection, arbitrarily assigning to vertex $\{1\}$ the role of the root of the spanning tree.

Note that it is possible that some of the dictionary atoms resulting from different spanning trees are the same. Thus, the step of pruning the dictionary might be necessary. In particular, duplicate of dictionary atoms will arise when cutting some edge of two spanning trees $T_i, T_j, i\not=j$ gives place to the same partition of vertices of a graph.
It follows that the dictionary atom will be generated by all the possible ways to select $(n-2)$ linearly independent rows from $D$, i.e. all the possible ways to partition the graph $\vec{G}$ into two connected components, both of them being tree graphs.

More formally, this corresponds to all the different possible ways to choose a set of edge indices $\bar{S}$, s.t. $\rank(D_{-\bar{S}})=n-2$ and $\abs{-\bar{S}}=m-n-2$. Let $E_{\bar{S}}:=\{e_i \in E, i \in \bar{S} \}$. Then $(V, E \setminus E_{\bar{S}})$ is a graph with two connected components, both of them being trees.

We have proven the following corollary.

\begin{Coro}\label{c02s5}
Let $\vec{G}=(V,E)$ be a graph (may be a tree graph as well as a nontree graph). Let $\{\bar{S}_j\}_{j \in [p]}$ be the set of all unique sets of edge indices $\bar{S}_j \subset [m]$, s.t. $\abs{\bar{S}_j}=m-n+2$, $\rank(D_{-\bar{S}_j})=n-2$ and  $(V, E\setminus E_{\bar{S}_j})=((V_1)_j, (E_1)_j)\cup ((V_2)_j, (E_2)_j), ( V_1)_j \cap (V_2)_j= \emptyset, (V_1)_j \cup (V_2)_j=V$ and $\{1\}\in (V_1)_j$.
Let $X_j=v_j/\norm{Dv_j}_1$, where 
$$ (v_j)_i= \begin{cases} 0, & i \in (V_1)_j,\\
1, & i \in (V_2)_j,\end{cases}$$
and $X= \{X_j\}_{j \in p}$.

Then
\begin{eqnarray*}
\hat{f}_{\text{A}}&=& \arg \min_{f\in \R^n} \left\{ \norm{Y-f}^2_n + 2 \lambda\norm{Df}_1 \right\}\\
&=& \begin{bmatrix} 1_n & X \end{bmatrix} \arg \min_{\beta\in \R^{p+1}} \left\{\norm{Y-\begin{bmatrix} 1_n & X \end{bmatrix}\beta}^2_n+ 2\lambda \norm{\beta_{-1}}_1 \right\}.
\end{eqnarray*}

\end{Coro}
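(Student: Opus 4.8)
The plan is to read the statement as a direct instance of the general recipe of Theorem \ref{t01s4}, applied to the incidence matrix $D$ of the connected graph $\vec{G}$, and then to translate each step of that recipe into graph-theoretic language. First I would record the two structural facts about a connected graph's incidence matrix: $\rank(D)=n-1$ and $\N(D)=\text{span}(1_n)$. Hence in the notation of Theorem \ref{t01s4} we have $r=n-1$ and $n-r=1$, so the nullspace-spanning block $A$ consists of a single row, which I take to be $1_n'$. Inverting each $B_i=\begin{pmatrix} 1_n' \\ D_{T_i} \end{pmatrix}$ produces a first column lying in $\text{span}(1_n)$ (concretely $1_n/n$); since this column is unpenalized, any nonzero rescaling is harmless, so I may replace it by the single atom $1_n$ displayed in the corollary.

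Next I would identify the admissible index sets. A set $T_i\subseteq[m]$ with $\abs{T_i}=n-1$ and $\rank(D_{T_i})=n-1$ is exactly the edge set of a spanning tree of $\vec{G}$ (Definition \ref{d02s5}): $n-1$ edges of full incidence rank on $n$ vertices form a connected acyclic spanning subgraph. For a fixed spanning tree $T_i$ I would invoke the tree computation of the previous subsection together with Corollary \ref{c01s5}: the penalized columns $\tilde{X}^i$ of $B_i^{-1}$ are, up to a component in $\N(D)$ and the normalization, the indicator vectors of the two-part vertex partition obtained by deleting one edge of the spanning tree, with the root-component receiving value $0$ and the complementary component value $1$. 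By Lemma \ref{l21} (see the Note following Theorem \ref{t01s4}) the $\N(D)$-component, being in the column span of the unpenalized atom $1_n$, may be discarded without affecting prediction, leaving the $0/1$ indicators $v_j$.

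Then I would carry out the pruning of step 6. Collecting $\tilde{X}=\{\tilde{X}^i\}_i$ over all spanning trees, a repeated atom occurs precisely when cutting an edge of two different spanning trees induces the same vertex partition; after deleting duplicates the surviving atoms are in bijection with the partitions of $V$ into two connected (tree) components with $\{1\}$ in the first part. I would make this bijection explicit through the edge-cut sets $\bar{S}_j$: retaining the complementary $n-2$ edges, the condition $\rank(D_{-\bar{S}_j})=n-2$ forces the remaining graph to have exactly two connected components (the incidence rank of a graph on $n$ vertices with $c$ components is $n-c$), while the count $\abs{\bar{S}_j}=m-n+2$ forces exactly $n-2$ retained edges; two components spanning $n$ vertices with $n-2$ edges must both be trees. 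The normalization $X_j=v_j/\1norm{Dv_j}$ is step 5 of the recipe, and Corollary \ref{c01s4} then assembles the unpenalized column $1_n$, the pruned atoms $X$, and the penalty $\1norm{\beta_{-1}}$ into the claimed identity.

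The step I expect to be the main obstacle is the combinatorial identification in the pruning paragraph: one must verify both that every partition of $V$ into two connected tree-components actually arises as an edge-cut of some spanning tree (so that no admissible atom is missed), and that the rank-and-cardinality conditions on $\bar{S}_j$ characterize exactly these partitions and nothing else. The first direction amounts to extending the spanning trees of $(V_1)_j$ and $(V_2)_j$ by a single crossing edge to a spanning tree of $\vec{G}$; the second is the standard relation between connected components and incidence-matrix rank. Everything else is a bookkeeping translation of Theorem \ref{t01s4}, with the nullspace freedom supplied by Lemma \ref{l21}.
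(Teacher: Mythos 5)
Your proposal is correct and follows essentially the same route as the paper: the paper's ``proof'' is exactly the discussion preceding the corollary, which applies Theorem \ref{t01s4}, identifies the full-rank row subsets $T_i$ with spanning trees (Definition \ref{d02s5}), reuses the tree-graph atom computation rooted at vertex $1$ (with Lemma \ref{l21} absorbing the $\N(D)$-component and the unpenalized column $1_n$), and prunes duplicate atoms arising when cuts of different spanning trees induce the same vertex partition, characterized by the cardinality and rank conditions on $\bar{S}_j$. If anything, you make explicit two points the paper only asserts---the identity $\rank(D)=n-c$ relating rank to the number of connected components, and the verification that every partition into two connected tree components is realized by cutting some spanning tree.
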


\begin{remark}
Note that $X$ as it is is not uniquely defined, since it is not perpendicular to $\N(D)$ (i.e. its columns are not centered) and thus depends on the amound of deviation by its centered version. Let $X$ and $\Xi$ be two dictionaries differing only in the amount of deviation by their centered versions $A_{\N(D)}X$ and $A_{\N(D)}\Xi$. Then $A_{\N(D)}X=A_{\N(D)}\Xi$.
\end{remark}

\begin{remark}
Perhaps astonishingly, the dimensionalities of the analysis and of the synthesis problems in the overdetermined case may be very different. Indeed, the analysis problem is $n$ dimensional, while the synthesis problem in the worst case is $ \binom{m}{n-2}+1$ dimensional, where $m$ is the number of edges of the graph considered.

This is due to the fact that the incidence matrix defines a polytope having at least $ \binom{n}{n-2}\asymp n^2$ pairs of antipodal vertices in the overdetermined case.

The dictionary of the equivalent synthesis problem then contains an atom for each of these pairs of antipodal verices making its dimensionality much larger than $n$.
\end{remark}
\section{Why synthesis? A small review of the literature.}\label{s6}

To our knowledge, the only attempt to analyze some theoretical properties of the total variation regularized estimator over a wide range of graphs, such as the two-dimensional grid, the star graph and the $k^{\text{th}}$ power graph of the cycle, has been done by \cite{hutt16}.
The notion of sparsity used in their result coincides with the number of nonzero differences of the signal values across the $m$ edges of the graph $\vec{G}$ considered.
This number is also called cut metric and can be written as $\norm{D_{\vec{G}} f}_0$, where $D_{\vec{G}}$ is the incidence matrix of the graph $\vec{G}$ considered.

\cite{hutt16} obtain a general result for total variation regularized estimators over graphs, that they then narrow down according to different specific classes of graphs. Their result needs the definition of two quantities: the compatibility factor of $D_{\vec{G}}$ and the inverse scaling factor of $D_{\vec{G}}$.
Please note that now $\vec{G}$ has a general number of edges $m$, i.e. $D_{\vec{G}}\in\{-1,0,1\}^{m\times n}$, $m\ge n-1$.

Let $S'\subseteq [m]$ and $s'=\abs{S'}$.

\begin{definition}[Compatibility factor of $D_{\vec{G}}$, \cite{hutt16}]
The compatibility factor $\tilde{\tilde{\kappa}}_{S'}(D_{\vec{G}})$ of $D_{\vec{G}}$ for $S'$ is defined as
$$
\tilde{\tilde{\kappa}}_{S'}(D_{\vec{G}}):=\begin{cases}
\inf_{f\in\R^n}\frac{\sqrt{s'}\norm{f}_2}{\norm{(Df)_{S'}}_1} & , S'\not= 0,\\
1 &, S'=0.
\end{cases}
$$
\end{definition}

Let $D^+_{\vec{G}}\in\R^{n\times m}$ denote the Moore-Penrose pseudoinverse of $D_{\vec{G}}$ and let $d^+_1, \ldots, d^+_m$ denote its columns.

\begin{definition}[Inverse scaling factor of $D_{\vec{G}}$, \cite{hutt16}]
The inverse scaling factor $\rho(D_{\vec{G}})$ of $D_{\vec{G}}$ is defined as
$$ \rho(D_{\vec{G}}):=\max_{j\in[m]}\norm{d^+_j}_2.$$
\end{definition}

\cite{hutt16} prove that, if the tuning parameter $\lambda$ of the Edge Lasso is chosen as $\lambda\asymp \frac{\sigma}{n} \rho(D_{\vec{G}}) \sqrt{\log m}$, then with high probability
\begin{eqnarray*}
\norm{\hat{f}-f^0}^2_n&\le& \inf_{\substack{S'\subseteq [m]\\ f\in\R^n}}\left\{\norm{f^0-f}^2_n+ 4 \lambda \norm{(Df)_{-S'}}_1\right\} \\
& &+ \bigo\left(\frac{\sigma^2}{n} \frac{\rho^2(D_{\vec{G}})}{\tilde{\tilde{\kappa}}^2_{S'}(D_{\vec{G}})} s' \log m\right).
\end{eqnarray*}

The rates for the specific classes of graphs can be obtained by finding lower bounds on $\tilde{\tilde{\kappa}}_{S'}(D_{\vec{G}})$ and upper bounds on $\rho(D_{\vec{G}})$. The results obtained by \cite{hutt16} are summarized in Table \ref{tab1}.

\begin{table}\label{tab1}
\centering
\caption{Rates by \cite{hutt16}}
\begin{tabular}{|l|l|l|l|l|}
\hline
 & path & 2D grid & star & cycle \\ \hline
 $\rho(D_{\vec{G}})$ & $\sqrt{n}$ & $\bigo(\sqrt{\log n})$ & $\le 1$ & $\le \sqrt{n}$ \\ \hline
 $\tilde{\tilde{\kappa}}_{S'}(D_{\vec{G}})$ & $\Omega(1), s'\ge 2$ & $\Omega(1), s'\ge 6$ & $ \Omega(1/\sqrt{s'})$ & $ \Omega(1)$ \\ \hline
 $\rho^2(D_{\vec{G}})/\tilde{\tilde{\kappa}}^2_{S'}(D_{\vec{G}})$ & $\bigo(n)$ & $\bigo(\log n)$ & $\bigo(s')$ & $\bigo(n)$ \\ \hline
 $m$ & $n-1$ & $n-2\sqrt{n}$ & $n-1$ & $n$ \\ \hline
 rate by \cite{hutt16}& $\sigma^2 s' \log n$ & $\frac{\sigma^2 s' \log^2 n}{n}$ & $\frac{\sigma^2 {s'}^2 \log n}{n}$ & $\sigma^2 s' \log n$ \\ \hline
 \end{tabular}
\end{table}

We argue that the approach by \cite{hutt16}, even though it delivers an almost optimal rate for the case of the two dimensional grid, is not very idoneous for an extension to general graphs. There are 3 main reasons:

\begin{enumerate}
\item The ratio between the compatibility factor and the inverse scaling factor according to \cite{hutt16} is too strong to handle certain graphs, e.g. the path or the cycle graph. Indeed we can look at the ratio $\tilde{\tilde{\kappa}}^2_{S'}(D_{\vec{G}}) / \rho^2(D_{\vec{G}})$ as an analogous to the strong compatibility constant
$$
\tilde{\kappa}^2_{S'}(D_{\vec{G}}):= \inf_{f\in\R^n}\frac{s'\norm{f}^2_2}{n \norm{(Df)_{S'}}^2_1}\approx
\tilde{\tilde{\kappa}}^2_{S'}(D_{\vec{G}}) / \rho^2(D_{\vec{G}}).$$
This quantity is apparently too small to ensure convergence of the MSE for the cases of the path and the cycle graphs. Thus, it would make sense to try to find a lower bound for the weak compatibility constant. The weak compatibility constant was introduced and used by \cite{dala17} and later exploited by \cite{orte18} to prove an oracle inequality for the total variation regularized estimator on a class of tree graphs.
The weak compatibility constant for the case where $\vec{G}$ is the path graph is defined as
$$
\kappa^2_{S'}(D_{\vec{G}})= \inf_{f\in\R^n} \frac{s'\norm{f}^2_n}{(\norm{(D_{\vec{G}} f)_{S'}}_1-\norm{(D_{\vec{G}} f)_{-S'}}_1)^2},
$$
and is at least as large as the strong compatibility constant $\tilde{\kappa}_{S'}(D_{\vec{G}})$.


\item The cut metric is not a good measure of the sparsity of the signal, as explained in \cite{padi17}. Indeed, consider the two dimensional grid. There might be signals having few constant pieces but very large cut metrics.
It seems that a good measure of the sparsity of the signal could be the number of piecewise constant regions in the graph, rather than the number of jumps.

\item For general graph structures containing cycles, i.e. not being trees, and in particular for the cycle graph and the two dimensional grid, the candidate set $S'\subseteq [m]$ of edges across which the signal has nonzero differences has to satisfy some conditions to make sense. For instance, for the cycle graph it does not make sense to have $s'=1$. In addition all the jumps have to sum up to zero.

\end{enumerate}

The last two points speak against the utilization of the approach by \cite{hutt16}. Therefore, the alternative synthesis approach seems to be  idoneous for ensuring that the concerns exposed in the last two points are avoided and that a more coherent notion of the sparsity of the signal on a graph is utilized when proving sparse oracle inequalities or other results involving the sparsity of the signal.



\appendix

\section{Proofs of Section \ref{s2}}\label{appA}

\begin{proof}[Proof of Lemma \ref{l21}]
We start by analyzing the estimator $\hat{\beta}$.
We can write the data fidelity term as follows:
\begin{eqnarray*}
 & & \norm{Y-\begin{bmatrix}\X_U & \X_{-U}\end{bmatrix}\beta}^2_n\\
& = & \norm{\Pi_U Y+ A_U Y -\X_U \beta_U-\Pi_U \X_{-U}\beta_{-U}-A_U \X_{-U} \beta_{-U}}^2_n\\
& = & \norm{\Pi_U (Y-\X_{-U}\beta_{-U}-\X_U\beta_U)}^2_n + \norm{A_U(Y-\X_{-U}\beta_{-U})}^2_n
\end{eqnarray*}

It follows that 
$$ \hat{\beta}_{U}= (\X_U'\X_U)^{-1}\X_U'(Y-\X_{-U}\hat{\beta}_{-U})$$

and

$$ \hat{\beta}_{-U}= \arg\min_{\beta_{-U}\in\R^{p-u}} \left\{\norm{A_U(Y-\X_{-U}\beta_{-U})}^2_n+2\lambda \norm{\beta_{-U}}_1 \right\}.$$

Next we consider the estimator $\hat{\beta}^{\Pi}$.
We can write the data fidelity term as follows:
\begin{eqnarray*}
& & \norm{Y-\begin{bmatrix}\X_U & A_U \X_{-U}\end{bmatrix}\beta}^2_n\\
& = & \norm{\Pi_U Y+ A_U Y -\X_U \beta_U-A_U \X_{-U} \beta_{-U}}^2_n\\
& = & \norm{\Pi_U (Y-\X_U\beta_U)}^2_n + \norm{A_U(Y-\X_{-U}\beta_{-U})}^2_n.
\end{eqnarray*}

It follows that

$$ \hat{\beta}^{\Pi}_U= (\X_U'\X_U)^{-1}\X_U'Y= \hat{\beta}_U+(\X_U'\X_U)^{-1}\X_U'\X_{-U}\hat{\beta}_{-U}$$

and

$$ \hat{\beta}^{\Pi}_{-U}= \arg\min_{\beta_{-U}\in\R^{p-u}} \left\{\norm{A_U(Y-\X_{-U}\beta_{-U})}^2_n+2\lambda \norm{\beta_{-U}}_1 \right\}.$$

We now compute $\hat{f}$ and $\hat{f}^{\Pi}$. For $\hat{f}$ we have that
\begin{eqnarray*}
\hat{f}&=& \X_U\hat{\beta}_U+\X_{-U}\hat{\beta}_{-U}\\
&=& \X_U \hat{\beta}^{\Pi}_U- \X_U (\X_U'\X_U)^{-1}\X_U'\X_{-U}\hat{\beta}_{-U} + \X_{-U}\hat{\beta}_{-U}\\
&=& \X_U \hat{\beta}^{\Pi}_U + A_U \X_{-U} \hat{\beta}_{-U}.
\end{eqnarray*}

and for $\hat{f}^{\Pi}$ we have that
$$ \hat{f}^{\Pi}=\X_U \hat{\beta}_U^{\Pi}+ A_U \X_{-U} \hat{\beta}_{-U}.$$

Hence the lemma follows.
\end{proof}

\begin{proof}[Proof of Lemma \ref{l22}]
We first note that
$$ \begin{bmatrix} \X_{U} & \X_{-U} \end{bmatrix} \begin{bmatrix} \D_{U} \\ \D_{-U} \end{bmatrix}= \X_{U}\D_{U} + \X_{-U}\D_{-U} = \text{I}_n , $$
which means that $\X_{-U}\D_{-U}= \text{I}_n -\X_{U}\D_{U}$.

Moreover,
$$ \begin{bmatrix}\D_{U} \\ \D_{-U} \end{bmatrix} \begin{bmatrix} \X_{U} & \X_{-U}\end{bmatrix}= \begin{bmatrix} \D_{U}\X_{U} & \D_{U}\X_{-U} \\ \D_{-U}\X_{U} & \D_{-U}\X_{-U} \end{bmatrix}= \begin{bmatrix} \text{I}_{n-m} & 0 \\ 0 & \text{I}_m  \end{bmatrix}.$$

It is known that the Moore-Penrose pseudoinverse of a matrix exists and is unique. Thus, if we can show that $\D_{-U}^+$ of the form exposed in the formulation of the lemma satisfies the four Moore-Penrose equations, we show that it is the Moore-Penrose pseudoinverse of $\D_{-U}$.

\begin{enumerate}

\item The first criterion to satisfy is: $\D_{-U} \D_{-U}^+ \D_{-U}= \D_{-U} $.\\
We check it by writing
\begin{eqnarray*}
\D_{-U} \D_{-U}^+ \D_{-U}&=& \D_{-U} (\text{I}_n-\X_{U} (\X_{U}'\X_{U})^{-1}\X_{U}')\X_{-U} \D_{-U}\\ &=& \D_{-U}\X_{-U}\D_{-U} = \D_{-U},
\end{eqnarray*}
which proves that $\D^{+}_{-U}$ satisfies it.

\item The second criterion is $ \D_{-U}^+ \D_{-U} \D_{-U}^+= \D_{-U}^+$. \\
We check it by writing
\begin{eqnarray*}
&& \D_{-U}^+ \D_{-U} \D_{-U}^+ = \\
&=& (\text{I}_n-\X_{U} (\X_{U}'\X_{U})^{-1}\X_{U}')\X_{-U} \D_{-U} (\text{I}_n-\X_{U} (\X_{U}'\X_{U})^{-1}\X_{U}')\X_{-U}\\
&=& (\text{I}_n-\X_{U} (\X_{U}'\X_{U})^{-1}\X_{U}')\X_{-U} = \D_{-U}^+,
\end{eqnarray*}
which proves that $\D^{+}_{-U}$ satisfies it.

\item  The third criterion is $( \D_{-U} \D_{-U}^+)'= \D_{-U} \D_{-U}^+$. \\
We show that $\D_{-U} \D_{-U}^+$ is symmetric, which implies the above equation. Indeed,
\begin{eqnarray*}
\D_{-U} \D_{-U}^+ &=& \D_{-U} (\text{I}_n-\X_{U} (\X_{U}'\X_{U})^{-1}\X_{U}')\X_{-U} \\
&=& \D_{-U}\X_{-U} = \text{I}_m.
\end{eqnarray*}

\item The fourth criterion is $(\D_{-U}^+ \D_{-U})'= \D_{-U}^+ \D_{-U}$. \\
We show that $\D_{-U}^+ \D_{-U}$ is symmetric, which implies the above equation. Indeed, 
\begin{eqnarray*}
\D_{-U}^+ \D_{-U}&=& (\text{I}_n-\X_{U} (\X_{U}'\X_{U})^{-1}\X_{U}')\X_{-U} \D_{-U} \\
&=& (\text{I}_n-\X_{U} (\X_{U}'\X_{U})^{-1}\X_{U}')(\text{I}_n-\X_{U}\D_{U})\\
&=& \text{I}_n - \X_{U} (\X_{U}'\X_{U})^{-1}\X_{U}' + \X_{U}\D_{U} - \X_{U}\D_{U}\\
&=& \text{I}_n - \X_{U} (\X_{U}'\X_{U})^{-1}\X_{U}',
\end{eqnarray*}
which is symmetric.
\end{enumerate}

\end{proof}
\section{Proofs of Section \ref{s3}}\label{appB}

\begin{proof}[Proof of Lemma \ref{l31}, \cite{elad07}]
By assumption we have that $\D$ is of full column rank, thus $\X=\D^+=\D'(\D\D')^{-1}$ and $\D \X=\text{I}_n$. Let $\Pi_\D$ denote the orthogonal projection matrix onto the row space of $\D$ and $A_\D$ the corresponding antiprojection matrix.

We note that the analysis estimator can be written as
\begin{eqnarray*}
\hat{f}_{\text{A}} &=& \arg \min_{f\in\R^n} \left\{ \norm{Y-f}^2_n + 2 \lambda \norm{\D f}_1 \right\} \\
&=& \arg \min_{\Pi_\D f, A_\D f\in \R^n} \left\{ \norm{\Pi_\D Y-\Pi_\D f}^2_n+ \norm{A_\D Y-A_\D f}^2_n +  2 \lambda \norm{\D \Pi_\D f}_1  \right\}.
\end{eqnarray*}

We thus see that
$$A_\D\hat{ f}_{\text{A}}=A_\D Y$$
and
$$\Pi_\D\hat{ f}_{\text{A}}=\arg\min_{\Pi_\D f\in\R^n} \left\{ \norm{\Pi_\D Y-\Pi_\D f}^2_n+ 2 \lambda \norm{\D\Pi_\D f}_1 \right\}.$$

Notice that  $\Pi_\D f$ is spanned by the columns of $\X$ and can be written as $\Pi_\D f=\X\beta$, for some $\beta$.

Thus

\begin{eqnarray*}
\Pi_\D \hat{f}_{\text{A}} &=& \X \arg \min_{\beta\in\R^{m}} \left\{ \norm{\Pi_\D Y - \X \beta}^2_n + 2 \lambda \norm{\D \X\beta}_1 \right\}\\
&=& \X \arg \min_{\beta\in\R^{m}} \left\{ \norm{\Pi_\D Y - \X \beta}^2_n + 2 \lambda \norm{\beta}_1 \right\}\\
&=& \X \arg \min_{\beta\in\R^{m}} \left\{ \norm{\Pi_\D Y - \X \beta}^2_n + \norm{A_\D Y}^2_n + 2 \lambda \norm{\beta}_1 \right\}\\
&=& \X \arg \min_{\beta\in\R^{m}} \left\{ \norm{Y - \X \beta}^2_n + 2 \lambda \norm{\beta}_1 \right\}= \hat{f}_{\text{S}} .
\end{eqnarray*}
We get that $\hat{f}_{\text{A}}=\hat{f}_{\text{S}}+A_\D Y$.
\end{proof}

\begin{proof}[Proof of Lemma \ref{l311}]
We start by noting that $A_{\D}= A_{\text{rowspan}(\D)}= \Pi_{\N(\D)}$.
By Lemma 2.2 with
$$ \tilde{\D}= \begin{bmatrix} A \\ \D \end{bmatrix} \text{ and } \X= \begin{bmatrix} \X_U & \X_{-U} \end{bmatrix},$$
we get that $\D^+= A_U \X_{-U}$.

Since $\tilde{\D}\X= \text{I}_n$, we have that $\D \X_U=0$, i.e. the columns of $\X_U$ are in the kernel of $\D$ independently of the choice of $A$.
At the same time we also have $A \X_{-U}=0$, i.e. the columns in $\X_{-U}$ have to be orthogonal to $A$.

Thus we see that $A_{\D}= \Pi_{\N(\D)}= \Pi_{\text{colspan}(\X_U)}= \X_U (X_U' X_U)^{-1} X_U'$.

We have that

$$ \Pi_{\N(D)}Y=:\Pi_U Y = \X_U\arg\min_{\beta_U \in \R^{u}}\norm{Y-\X_U\beta_U}^2_2=\X_U\arg\min_{\beta_U \in \R^{u}}\norm{\Pi_U(Y-\X_U\beta_U)}^2_2.$$

By Lemma \ref{l31} combined with the proof of Lemma \ref{l21} we see that we can write

$$ \hat{f}_{\text{A}}= \begin{bmatrix} \X_U & A_U \X_{-U} \end{bmatrix} \arg \min_{\beta\in \R^n} \left\{\norm{Y-\begin{bmatrix}\X_U & A_U \X_{-U}\end{bmatrix}\beta}^2_n + 2\lambda \norm{\beta_{-U}}_1 \right\},$$
where $\X_U$ is in the kernel of $\D$.

If we  choose $A$, s.t. $\text{span}(A)= \N(\D)$, then we have that $\X_{-U}$ and $\X_U$ are orthogonal and thus $\X_{-U}$ is the Moore-Penrose pseudoinverse of $\D$.

If we do not choose $A$ to be in the kernel of $\D$, then $ A_U \X_{-U}\not= \X_{-U}$, i.e. $\X_{-U}$ as it is is not the Moore-Penrose pseudoinverse anymore.

By Lemma \ref{l21} we know that we can leave out the antiprojection matrix without changing the prediction properties of the estimator.
We thus get that

$$ \hat{f}_{\text{A}}= \begin{bmatrix} \X_U &  \X_{-U} \end{bmatrix} \arg \min_{\beta\in \R^n} \left\{\norm{Y-\begin{bmatrix}\X_U &  \X_{-U}\end{bmatrix}\beta}^2_n + 2\lambda \norm{\beta_{-U}}_1 \right\}.$$
\end{proof}

\begin{proof}[Proof of Lemma \ref{l32}, \cite{elad07}]
In the definition of the synthesis estimator set $\beta=\D f$. Then $f=\D^{-1}\beta=\X\beta$. We thus have that
$$ \hat{f}_{\text{A}}=\X\arg\min_{\beta\in\R^n} \left\{\norm{Y-\X\beta}^2_n + 2 \lambda \norm{\beta}_1 \right\}= \hat{f}_{\text{S}}.$$
The same argument can be used to derive an equivalent analysis formulation from the synthesis problem.

\end{proof}
\section{Proofs of Section \ref{s4}}\label{appC}

\begin{proof}[Proof of Theorem \ref{t01s4}]
Note that
$$ A_{\N(\D)}= \Pi_{\N^{\perp}(\D)}= \Pi_{\text{rowspan}(\D)}.$$

We have that
\begin{eqnarray*}
&&\norm{Y-f}^2_n+ 2 \lambda\norm{\D f}_1\\
&=& \norm{\Pi_{\N(\D)}(Y-f)}^2_n+ \norm{A_{\N(\D)}(Y-f)}^2_n + 2 \lambda \norm{\D A_{\N(\D)}f}_1.
\end{eqnarray*}

It follows that
$$ \Pi_{\N(\D)}\hat{f}_{\text{A}}= \Pi_{\N(\D)} Y$$
and 
$$ A_{\N(\D)} \hat{f}_{\text{A}}= \arg \min_{f \in \N^{\perp}(\D)} \left\{ \norm{A_{\N(\D)}(Y-f) }^2_n+ 2\lambda \norm{\D f}_1 \right\}.$$

Note that $\text{dim}(\N^\perp(\D))=r$ and the initial analysis problem hides in itself a pure least square part to estimate $\Pi_{\N(\D)}f^0$ and a regularized empirical risk minimization step to estimate $A_{\N(\D)}f^0$, where the search for the minimizer happens in a $r$-dimensional linear subspace of $\R^n$.

Moreover, note also that $\{f\in \R^n: \norm{\D f}_1\le 1\}$ is an unbounded set if $r\not= n$. In the above empirical risk minimization step however we restrict to a $r$-dimensional hyperplane, i.e. to $\{f\in \R^n: f \in \text{rowspan}(\D) \}$. Indeed,
\begin{eqnarray*}
&&\{f\in \R^n: \norm{\D f}_1\le 1\}\cap \{f\in \R^n: f \in \text{rowspan}(\D) \}\\
&=&\{f\in \R^n: \norm{\D f}_1\le 1 \wedge f \in \text{rowspan}(\D) \}=: \tilde{\Psi}_{\D}
\end{eqnarray*}
is a $r$-dimensional polytope in $\{f\in \R^n: f \in \text{rowspan}(\D) \}$.

We now want to apply Lemma \ref{l33}. Lemma \ref{l33} adapted to $\tilde{\Psi}_{\D}$ tells us that, for $f \in \partial \tilde{\Psi}_{\D}$, if $k$ is the rank of the rows to which $f$ is orthogonal and if $f$ is orthogonal to $A$, then $f\in \R^n$ is strictly within a face of dimension $(n-r-k-1)$ of the $(n-r)$-dimensional polytope $\tilde{\Psi}_{\D}$.

Thus, by the interpretation of the matrix inverse in Subsection \ref{s41}, we see that the vertices of $\tilde{\Psi}_{\D}$ can be found by first finding all the sets $T_i \subseteq [m]: \abs{T_i}=r, \N(\D)=\N(\D_{T_i})$ and a matrix $A\in \R^{(n-r)\times n}$ spanning $\N(\D)$ and then inverting the matrices $B_i:= \begin{pmatrix} A \\ \D_{T_i} \end{pmatrix}, \forall i$ to find $\begin{pmatrix} J & \tilde{\X} \end{pmatrix}$. Note that $J \in \R^{n\times (n-r)}$ is the same $\forall i$ and spans $\N(\D)$. Write $\tilde{\X}= \{\tilde{\X}^i\}_i$. The columns of $\tilde{\X}^i$ have to be normalized s.t. the resulting normalized columns of $X^i$ belong to the boundary $\partial  \tilde{\Psi}_{\D}$ of $\tilde{\Psi}_{\D}$. After pruning we obtain the dictionary $\X$, which contains the vertices of $\tilde{\Psi}_{\D}$.
\end{proof}
\section{Some examples} \label{s7}
In this section we want to expose some examples of the classical analysis formulations of the first, second and third order total variation regularized estimators and of their corresponding analysis form for the path graph, the path graph with one branch and the cycle graph (which is not a tree graph). This allows us to see an application of the theory developed and in particular of Theorem \ref{t01s4}.  Moreover, these examples might be the starting point for the study and the development of some theory analogous to the one by \cite{orte18} for higher order total variation regularized estimators and for graphs that are not trees, for instance the cycle graph.

What the follwing examples show, is that the case $k=1$ is special for the cycle graph, in the sense that the dictionary atoms for $k=2$ and $k=3$ obtained for the corresponding synthesis form of the total variation regularized estimator over the cycle graph are quite different from the dictionary atoms that we obtain for the same kind of estimator on the path graph. Thus, for higher order total variation regularized estimators there seem to be less space for handling the case of the cycle graph by recycling the theory eventually developed for the path graph and for tree graphs in general.
Indeed, for $\vec{G}$ being a tree graph, $\rank(D^k_{\vec{G}})=n-k, k< n$, while for $\vec{G}$ being a cycle graph, $\rank(D^k_{\vec{G}})=n-1, \forall k$.

\subsection{Path graph}
In this subsection let $\vec{G}= (\{1, \ldots, n\}, \{(1,2), \ldots, (n-1,n) \})$, the path graph with $n$ vertices.

\subsubsection{k=1}

For $\vec{G}$, we have that

$$ D_{ij}= \begin{cases} -1, & j=i \\
+1, & j=i+1 \\
0, & \text{else}, \end{cases}, i \in [n-1],$$
where for notational convenience we omit the subscript $\vec{G}$ on the matrix $D$.

We choose $A= (1, 0 \ldots, 0) \in \R^{1 \times n}$ and we write $B = \begin{pmatrix} A \\ D \end{pmatrix}$.

The matrix $V=B^{-1}$ is given by
$$ V_{ij}= 1_{\{j\le i\}}, i,j\in [n].$$

We now make an example for $n=8$.
\begin{itemize}
\item Analysis

The $1^{\text{st}}$ order discrete graph derivative operator for $\vec{G}$ is
$$ D^1_{\vec{G}}= \begin{pmatrix*}[r]-1 & 1 & & & & & & \\
 & -1 & 1 & & & & & \\
 & & -1 & 1 & & & & \\
& & & -1 & 1 & & & \\
 & & & & -1 & 1 & & \\
 & & & & & -1 & 1 & \\
 & & & & & & -1 & 1\\
 \end{pmatrix*}\in \R^{7\times 8}.$$

\item Synthesis

We have that

$$ V= \begin{pmatrix} 1 &   &   &   &   &   &   &   \\
1 & 1 &   &   &   &   &   &   \\
1 & 1 & 1 &   &   &   &   &   \\
1 & 1 & 1 & 1 &   &   &   &   \\
1 & 1 & 1 & 1 & 1 &   &   &   \\
1 & 1 & 1 & 1 & 1 & 1 &   &   \\
1 & 1 & 1 & 1 & 1 & 1 & 1 &   \\
1 & 1 & 1 & 1 & 1 & 1 & 1 & 1 \\
\end{pmatrix}. $$

\end{itemize}

\subsubsection{k=2}

For $\vec{G}$, we have that

$$ D^2_{ij}= \begin{cases} +1, & j\in \{i, i+2\} \\
-2, & j=i+1 \\
0, & \text{else}, \end{cases}, i \in [n-2],$$
where for notational convenience we omit the subscript $\vec{G}$ on the matrix $D$.

We choose $$A = \begin{pmatrix} 1 & 0 & 0  \ldots & 0\\
-1 & 1 & 0 & \ldots & 0 \end{pmatrix}
\in \R^{2 \times n}$$ and we write $B = \begin{pmatrix} A \\ D^2 \end{pmatrix}$.

The matrix $V=B^{-1}$ is given by $V_1= 1_n'$ and
$$ V_{ij}= 1_{\{j\le i\}} (i-j+1), i\in [n], j \in [n] \setminus \{1\} .$$

We now make an example for $n=8$.
\begin{itemize}
\item Analysis

The $2^{\text{nd}}$ order discrete graph derivative operator for $\vec{G}$ is
$$ D^2_{\vec{G}}= \begin{pmatrix*}[r]1 & -2 & 1 & & & & & \\
 & 1 & -2 & 1 & & & & \\
 & & 1 & -2 & 1 & & & \\
& & & 1 & -2 & 1 & & \\
 & & & & 1 & -2 & 1 & \\
 & & & & & 1 & -2 & 1\\
 \end{pmatrix*}\in \R^{6\times 8}.$$

\item Synthesis

We have that

$$ V= \begin{pmatrix} 1 &   &   &   &   &   &   &   \\
1 & 1 &   &   &   &   &   &   \\
1 & 2 & 1 &   &   &   &   &   \\
1 & 3 & 2 & 1 &   &   &   &   \\
1 & 4 & 3 & 2 & 1 &   &   &   \\
1 & 5 & 4 & 3 & 2 & 1 &   &   \\
1 & 6 & 5 & 4 & 3 & 2 & 1 &   \\
1 & 7 & 6 & 5 & 4 & 3 & 2 & 1 \\
\end{pmatrix}. $$

\end{itemize}

\subsubsection{k=3}

For $\vec{G}$, we have that

$$ D^3_{ij}= \begin{cases} -1, & j=i \\
+3, & j=i+1 \\
-3, & j=i+2 \\
+1, & j=i+3 \\
0, & \text{else}, \end{cases}, i \in [n-3],$$
where for notational convenience we omit the subscript $\vec{G}$ on the matrix $D$.

We choose $$A = \begin{pmatrix} 1 & 0 & 0 & 0 & \ldots & 0\\
-1 & 1 & 0 & 0 & \ldots & 0 \\
1 & -2 & 1 & 0 & \ldots & 0 \\
\end{pmatrix}
\in \R^{3 \times n}$$ and we write $B = \begin{pmatrix} A \\ D^3 \end{pmatrix}$.

The matrix $V=B^{-1}$ is given by $V_1= 1_n'$, $ V_{i2}= i-1, i \in [n]$ and
$$ V_{ij}= 1_{\{j\le i\}} \frac{(i-j+1)(i-j+2)}{2}, i\in [n], j \in [n] \setminus \{1,2\} .$$

We now make an example for $n=8$.

\begin{itemize}
\item Analysis

The $3^{\text{rd}}$ order discrete graph derivative operator for $\vec{G}$ is
$$ D^3_{\vec{G}}= \begin{pmatrix*}[r]-1 & 3 & -3 & 1 & & & & \\
 & -1 & 3 & -3 & 1 & & & \\
 & & -1 & 3 & -3 & 1 & & \\
& & & -1 & 3 & -3 & 1 & \\
 & & & & -1 & 3 & -3 & 1\\
 \end{pmatrix*}\in \R^{5\times 8}.$$

\item Synthesis

We have that

$$ V= \begin{pmatrix} 1 &   &   &   &   &   &   &   \\
1 & 1 &   &   &   &   &   &   \\
1 & 2 & 1 &   &   &   &   &   \\
1 & 3 & 3 & 1 &   &   &   &   \\
1 & 4 & 6 & 3 & 1 &   &   &   \\
1 & 5 & 10  & 6 & 3 & 1 &   &   \\
1 & 6 & 15 & 10  & 6 & 3 & 1 &   \\
1 & 7 & 21 & 15 & 10  & 6 & 3 & 1 \\
\end{pmatrix}. $$

\end{itemize}

\subsection{Path graph with one branch}

Let now $\vec{G}= (\{ 1, \ldots, n\}, \{(1,2), \ldots, (n_1-1,n_1), (b, n_1+1), \ldots, (n-1,n)  \})$ be the path graph with $n=n_1+n_2$ vertices with the main branch consisting of $n_1$ vertices and the side branch consisting of $n_2$ vertices and being attached at vertx $1 < b < n_1$.

\subsubsection{k=1}

We have that

$$ D_{ij}= \begin{cases} -1, & j=i \\
+1, & j=i+1 \\
0, & \text{else}, \end{cases}, i \in [n-1]\setminus \{n_1\},$$
 and
 $$ D_{ij}= \begin{cases} -1, & j=b \\
+1, & j=n_1+1 \\
0, & \text{else}, \end{cases}, i \in [n-1]\setminus \{n_1\}, i = n_1.$$

We choose $A= (1, 0 \ldots, 0) \in \R^{1 \times n}$ and we write $B = \begin{pmatrix} A \\ D \end{pmatrix}$.

The matrix $V=B^{-1}$ is given by
$$ V_{ij}= 1_{\{j\le i\}}-1_{\{n_1+1\le i \le n\}\cap \{b+1 \le j \le n_1\}} , i,j\in [n].$$

We make an example with $n=8$, $b=4$ and $n_1=6$.

\begin{itemize}
\item Analysis

The $1^{\text{st}}$ order discrete graph derivative operator for $\vec{G}$ is
$$ D^1_{\vec{G}}= \begin{pmatrix*}[r]-1 & 1 & & & & & & \\
 & -1 & 1 & & & & & \\
 & & -1 & 1 & & & & \\
& & & -1 & 1 & & & \\
 & & & & -1 & 1 & & \\
 & & & -1 & & & 1 & \\
 & & & & & & -1 & 1\\
 \end{pmatrix*}\in \R^{7\times 8}.$$

\item Synthesis

We have that

$$ V= \begin{pmatrix} 1 &   &   &   &   &   &   &   \\
1 & 1 &   &   &   &   &   &   \\
1 & 1 & 1 &   &   &   &   &   \\
1 & 1 & 1 & 1 &   &   &   &   \\
1 & 1 & 1 & 1 & 1 &   &   &   \\
1 & 1 & 1 & 1 & 1 & 1 &   &   \\
1 & 1 & 1 & 1 &   &   & 1 &   \\
1 & 1 & 1 & 1 &   &   & 1 & 1 \\
\end{pmatrix}. $$

\end{itemize}

\subsubsection{k=2}

For $\vec{G}= (\{1, \ldots, n\}, \{(1,2), \ldots, (n-1,n) \})$, the path graph with $n$ vertices, we have that

$$ D^2_{ij}= \begin{cases} +1, & j\in \{i, i+2\} \\
-2, & j=i+1 \\
0, & \text{else}, \end{cases}, i \in [n-2]\setminus \{n_1, n_1+1\},$$
and

$$ D^2_{ij}= \begin{cases} +1, & j\in \{b-1, n_1+1\} \\
-2, & j=b \\
0, & \text{else}, \end{cases}, i =n_1,$$
and

$$ D^2_{ij}= \begin{cases} +1, & j\in \{b, n_1+2\} \\
-2, & j=n_1+1 \\
0, & \text{else}, \end{cases}, i= n_1+1,$$

We choose $$A = \begin{pmatrix} 1 & 0 & 0  \ldots & 0\\
-1 & 1 & 0 & \ldots & 0 \end{pmatrix}
\in \R^{2 \times n}$$ and we write $B = \begin{pmatrix} A \\ D^2 \end{pmatrix}$.

The matrix $V=B^{-1}$ is given by $V_1= 1_n'$ and
\begin{eqnarray*} V_{ij}&=& 1_{\{j\le i \le n_1\}} (i-j+1)\\
&+& (i-n_1+b-j +1) 1_{\{i \ge n_1+1, j \le b\}} \\
&+& (i-j+1) 1_{\{n_1+1 \le j \le i \le n\}}, i\in [n], j \in [n] \setminus \{1\} .
\end{eqnarray*}

We make an example with $n=8$, $b=4$ and $n_1=6$.

\begin{itemize}
\item Analysis

The $2^{\text{nd}}$ order discrete graph derivative operator for $\vec{G}$ is
$$ D^2_{\vec{G}}= \begin{pmatrix*}[r]1 & -2 & 1 & & & & & \\
 & 1 & -2 & 1 & & & & \\
 & & 1 & -2 & 1 & & & \\
& & & 1 & -2 & 1 & & \\
 & & 1 & -2 &  &  & 1 & \\
 & & &  1 & &  & -2 & 1\\
 \end{pmatrix*}\in \R^{6\times 8}.$$

\item Synthesis

We have that

$$ V= \begin{pmatrix} 1 &   &   &   &   &   &   &   \\
1 & 1 &   &   &   &   &   &   \\
1 & 2 & 1 &   &   &   &   &   \\
1 & 3 & 2 & 1 &   &   &   &   \\
1 & 4 & 3 & 2 & 1 &   &   &   \\
1 & 5 & 4 & 3 & 2 & 1 &   &   \\
1 & 4 & 3 & 2 &  &  & 1 &   \\
1 & 5 & 4 & 3 &  &  & 2 & 1 \\
\end{pmatrix}. $$

\end{itemize}

\subsubsection{k=3}

For $\vec{G}= (\{1, \ldots, n\}, \{(1,2), \ldots, (n-1,n) \})$, the path graph with $n$ vertices, we have that

$$ D^3_{ij}= \begin{cases} -1, & j=i \\
+3, & j=i+1 \\
-3, & j=i+2 \\
+1, & j=i+3 \\
0, & \text{else}, \end{cases}, i \in [n-3] \setminus \{n_1, n_1+1, n_1+2\},$$
and
$$ D^3_{ij}= \begin{cases} -1, & j=b-2 \\
+3, & j=b-1 \\
-3, & j=b \\
+1, & j=n_1+1 \\
0, & \text{else}, \end{cases}, i =n_1,$$
and
$$ D^3_{ij}= \begin{cases} -1, & j=b-1 \\
+3, & j=b \\
-3, & j=n_1+1 \\
+1, & j=n_1+2\\
0, & \text{else}, \end{cases}, i = n_1+1,$$
and
$$ D^3_{ij}= \begin{cases} b-1, & j=b \\
+3, & j=n_1+1 \\
-3, & j=n_1+2 \\
+1, & j=n_1+3 \\
0, & \text{else}, \end{cases}, i =n_1+2.$$

We choose $$A = \begin{pmatrix} 1 & 0 & 0 & 0 & \ldots & 0\\
-1 & 1 & 0 & 0 & \ldots & 0 \\
1 & -2 & 1 & 0 & \ldots & 0 \\
\end{pmatrix}
\in \R^{3 \times n}$$ and we write $B = \begin{pmatrix} A \\ D^3 \end{pmatrix}$.

The matrix $V=B^{-1}$ is given by $V_1= 1_n'$, $ V_{i2}= i-1 1_{\{ i \le n_1\}}+ (i-1-n_1+b) 1_{\{i >n_1\}}, i \in [n]$ and
\begin{eqnarray*}
V_{ij}&=& 1_{\{j\le i\le n_1\}} \frac{(i-j+1)(i-j+2)}{2}\\
&+&  1_{\{i \ge j> n_1\}} \frac{(i-j+1)(i-j+2)}{2}\\
&+& 1_{\{i >n_1, j \le b \}}  \frac{(i-j-n_1+b+1)(i-j-n_1+b+2)}{2} , i\in [n], j \in [n] \setminus \{1,2\} .\\
\end{eqnarray*}

We make an example with $n=8$, $b=4$ and $n_1=6$.
\begin{itemize}
\item Analysis

The $3^{\text{rd}}$ order discrete graph derivative operator for $\vec{G}$ is
$$ D^3_{\vec{G}}= \begin{pmatrix*}[r]-1 & 3 & -3 & 1 & & & & \\
 & -1 & 3 & -3 & 1 & & & \\
 & & -1 & 3 & -3 & 1 & & \\
& -1 & 3 & -3 &  & & 1 & \\
 & & -1 & 3 &  &  & -3 & 1\\
 \end{pmatrix*}\in \R^{5\times 8}.$$

\item Synthesis

We have that

$$ V= \begin{pmatrix} 1 &   &   &   &   &   &   &   \\
1 & 1 &   &   &   &   &   &   \\
1 & 2 & 1 &   &   &   &   &   \\
1 & 3 & 3 & 1 &   &   &   &   \\
1 & 4 & 6 & 3 & 1 &   &   &   \\
1 & 5 & 10  & 6 & 3 & 1 &   &   \\
1 & 4 & 6 & 3  &  &  & 1 &   \\
1 & 5 & 10  & 6 &   &  & 3 & 1 \\
\end{pmatrix}. $$
\end{itemize}

\subsection{Cycle graph}
In this subsection let $\vec{G}= (\{1, \ldots, n\}, \{(1,2), \ldots, (n-1,n), (n,1) \})$ be the path graph with $n$ vertices. Notice that $\vec{L}(\vec{G})= \vec{G}$.

We start the considerations about the cycle graph with the following lemma.

\begin{lemma}\label{lc01}
Let $D \in \R^{n \times n}$, s.t. $\N(D)= \N(D')$. Then $ \forall k \in \mathbb{N}$ we have that $\N(D^k)= \N(D)$ and thus $ \rank(D^k)= \rank(D)$.
\end{lemma}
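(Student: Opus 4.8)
The plan is to read $D^k$ as the ordinary matrix power (which is exactly the $k$-th order derivative operator in the situation motivating this lemma, since $\vec{L}(\vec{G})=\vec{G}$ forces every factor in Definition \ref{d03s1} to equal $D$) and to exploit the hypothesis $\N(D)=\N(D')$ through a single orthogonality observation. Since the inclusion $\N(D)\subseteq\N(D^k)$ is immediate for every $k$, only the reverse inclusion $\N(D^k)\subseteq\N(D)$ requires work.

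First I would recast the hypothesis geometrically. One always has $\text{colspan}(D)=\N^{\perp}(D')$, so the assumption $\N(D')=\N(D)$ gives $\text{colspan}(D)=\N^{\perp}(D)$; equivalently, $\R^n=\N(D)\oplus\text{colspan}(D)$ is an \emph{orthogonal} direct-sum decomposition. The key consequence I will use repeatedly is that $\N(D)\cap\text{colspan}(D)=\{0\}$.

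Next I would settle the base case $\N(D^2)=\N(D)$. If $D^2x=0$, then $Dx\in\N(D)$; but $Dx\in\text{colspan}(D)$ holds automatically, so $Dx$ lies in $\N(D)\cap\text{colspan}(D)=\{0\}$, whence $Dx=0$ and $x\in\N(D)$. I would then promote this to all $k$ via the standard fact that the ascending chain $\N(D)\subseteq\N(D^2)\subseteq\cdots$ stabilizes as soon as two consecutive terms coincide: from $\N(D^{j})=\N(D^{j+1})$ one shows $\N(D^{j+1})=\N(D^{j+2})$ by noting that $D^{j+2}x=0$ forces $Dx\in\N(D^{j+1})=\N(D^{j})$, i.e. $x\in\N(D^{j+1})$. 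Since equality already holds at $j=1$, induction yields $\N(D^k)=\N(D)$ for every $k\in\mathbb{N}$, and rank--nullity gives $\rank(D^k)=n-\dim\N(D^k)=n-\dim\N(D)=\rank(D)$.

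The only real obstacle is the first step: recognizing that $\N(D)=\N(D')$ is precisely the condition making $\N(D)$ and $\text{colspan}(D)$ orthogonal, so that the image $Dx$ of any vector cannot be a nonzero element of the kernel. Once this is in hand, the base case and the kernel-chain stabilization are routine.
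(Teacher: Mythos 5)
Your proof is correct and takes essentially the same approach as the paper: both rest on the observation that $\N(D')$ is the orthogonal complement of $\text{Im}(D)$, so the hypothesis forces $\N(D)\cap\text{Im}(D)=\{0\}$, and both then establish $\N(D^2)=\N(D)$ and extend to all $k$ by induction. The only cosmetic difference is your induction step: you invoke the generic stabilization of the kernel chain (which needs no further use of the hypothesis), whereas the paper re-applies the orthogonality argument at each step.
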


\begin{proof}[Proof of Lemma \ref{lc01}]
We prove Lemma \ref{lc01} by induction.

\underline{\textbf{Anchor}:}\\

It is known that $ \N(D')$ is the orthogonal complement of the image of $D$.
We thus have that
\begin{eqnarray*}
\N(D^2)&=& \N(D) \cup (\N(D)\cap \text{Im}(D))\\
&=& \N(D) \cup (\N(D')\cap \text{Im}(D))\\
&=&\N(D) \cup \{ 0 \}\\
&=& \N(D).\\
\end{eqnarray*}

\underline{\textbf{Step}:}\\

Assume that $ \N(D^k)= \N(D)$. Then 
\begin{eqnarray*}
\N(D^{k+1})&=& \N(D^k) \cup (\N(D)\cap \text{Im}(D))\\
&=&\N(D) \cup \{ 0 \}\\
&=& \N(D).\\
\end{eqnarray*}

\end{proof}

Let us now define the matrix $T \in \R^{n \times n}$ as
$$ T= \begin{pmatrix}( \text{I}_n)_{n} & ( \text{I}_n)_{-n} \end{pmatrix} .$$

We note that $T$ is invertible, $T'=T^{-1}$ and that $(T^k)'= T^{-k},\forall k \in \mathbb{N}$.

Note that, for an invertible matrix $B\in \R^{n \times n}$ we have that

$$ (MT^k)^{-1}= (T^k)^{-1} M^{-1}= T^{-k} M^{-1}.$$

Thus note that if $B= \begin{pmatrix} A \\ D \end{pmatrix}$, with $A \in \R^{(n-r)\times n}: A T^k= A \forall k \in \mathbb{N}$, then
$$ (BT^{k})^{-1}= \begin{pmatrix} (B^{-1})_{1:(n-r)} \\
T^{-k} (B^{-1})_{(n-r+1): n} \end{pmatrix}.$$

Since for $D$ being the incidence matrix of the cycle graph with $n$ vertices we have that $\N(D)= \N(D')= \text{span}(1_n)$ and all $D_{T_i}$ are obtained by rotation of the incidence matrix of the path graph with $n$ vertices rooted at vertex $1$, we have that it is enough to calculate the inverse of a $B_i$ and then rotate its last $(n-1)$ columns by left multiplication with $T^k, k \in [n-1]$.

Thus we know that it is enough to invert the matrix $ B_i \begin{pmatrix} A \\ D_{T_i} \end{pmatrix}$ for one $i$ to find its inverse $(B_i)^{-1}= \begin{pmatrix} J & \tilde{X}^i \end{pmatrix}$. Then 
$$ \tilde{X}= \{ T^k \tilde{X}^i \}_{k\in [n]}.$$

Let us define $T_i= [n]\setminus \{i\}$.
 In the following we are going to select an $\{i\}$ and to call $B$ the matrix $\begin{pmatrix} A \\ D^k_{T_i} \end{pmatrix}$, just omitting the subscript $i$. Moreover we are going to write $V:= B^{-1}$. Note that, due to Lemma \ref{lc01}, $A$ will always be selected to be $1_n'$, i.e. a row vector in the linear span of $1_n'$. In all the cases we are going to have that $v_1= 1_n/n$.

\subsubsection{k=1}

We select $T_i =n$. Note that the $j^{\text{th}}$ columns of $V$, also denoted $V_j=:v_j, j \in \{2, \ldots, n\}$ is a vector in $ \N(B_{-j})$.  To calculate it we can proceed as follows.

Delete the $j^{\text{th}}$ row of $B$. We first find a vector $u_j$ in  $\N(B_{-\{1,j\}})$. We see that
$$ u_j= ( \underbrace{0, \ldots, 0}_{j-1}, \underbrace{1, \ldots, 1}_{n-j+1})'$$
is in $\N(B_{-\{1,j\}})$. Moreover we see that $B_jv_j=1$, so we do not have to normalize. It remains to subtract from $u_j$ its mean $\bar{u}_j$, to find $v_j= u_j - \bar{u}_j$ in $\N(B_{-j})$.
We note that $\bar{u}_j= \frac{n-j+1}{n}, j \in \{2, \ldots, n\}$ and thus
\begin{eqnarray*} v_j&=& \left(\underbrace{- \frac{n-j+1}{n}, \ldots,- \frac{n-j+1}{n} }_{j-1}, \underbrace{1- \frac{n-j+1}{n}, \ldots, 1- \frac{n-j+1}{n}}_{n-j+1} \right)'\\
&=& \left(\underbrace{-1 + \frac{j-1}{n}, \ldots, -1 + \frac{j-1}{n}}_{j-1}, \underbrace{\frac{j-1}{n}, \ldots, \frac{j-1}{n}}_{n-j+1} \right)'.\\
\end{eqnarray*}

We now want to see how many such vertices it is possible to find.

Note that 
$$ \{T^k v_2\}_{k=0, 1, \ldots, n-1}= \{T^k v_n\}_{k=0,1, \ldots, n-1}$$
and that
$$ \{T^k v_{j}\}_{k=0, 1, \ldots, n-1}= \{T^k v_{n-j+2}\}_{k=0,1, \ldots, n-1}, j \in \mathbb{N}, j \le (n+1)/2 .$$

If $n$ is even, then we have that for $j= \frac{n+2}{2}$
$$ \{T^k v_{(n+2)/2}\}_{k=0, 1, \ldots, n/2-1}= \{T^k v_{(n+2)/2}\}_{k=n/2, \ldots, n-1}.$$

Thus, if $n$ is even we have $p= (\frac{n}{2}-1)n+ \frac{n}{2}= \frac{n(n-1)}{2}= \binom{n}{2}$ synthesis dictionary atoms, whose coefficients are penalized, and $n-r=1$ dictionary atom, whose coefficient is not penalized.

If $n$ is odd we have that 
$$ \{T^k v_{j}\}_{k=0, 1, \ldots, n-1}= \{T^k v_{n-j+2}\}_{k=0,1, \ldots, n-1}, j \in \mathbb{N}, j \le (n+1)/2 .$$

Thus we have $p= \frac{n-1}{2} n= \binom{n}{2}$ dictionary atoms, whose coefficients are penalized and $n-r=1$ dictionary atom, whose coefficient is not penalized.

It follows that, in general, $p=\frac{n(n-1)}{2}$ and $n-r=1$.

We now make an example with $n=8$.
\begin{itemize}
\item Analysis\\

The $1^{\text{st}}$ order discrete graph derivative operator for $\vec{G}$ is
$$ D^1_{\vec{G}}= \begin{pmatrix*}[r]-1 & 1 & & & & & & \\
 & -1 & 1 & & & & & \\
 & & -1 & 1 & & & & \\
& & & -1 & 1 & & & \\
 & & & & -1 & 1 & & \\
 & & & & & -1 & 1 & \\
 & & & & & & -1 & 1\\
 1& & & & & & &-1\\
 \end{pmatrix*}\in \R^{8\times 8}.$$
 
\item Synthesis

The following Figure \ref{fc1} represents the plot of the last $(n-1)$ columns of the matrix $V$ obtained as explained above.

\begin{figure}[h]\label{fc1}
\includegraphics{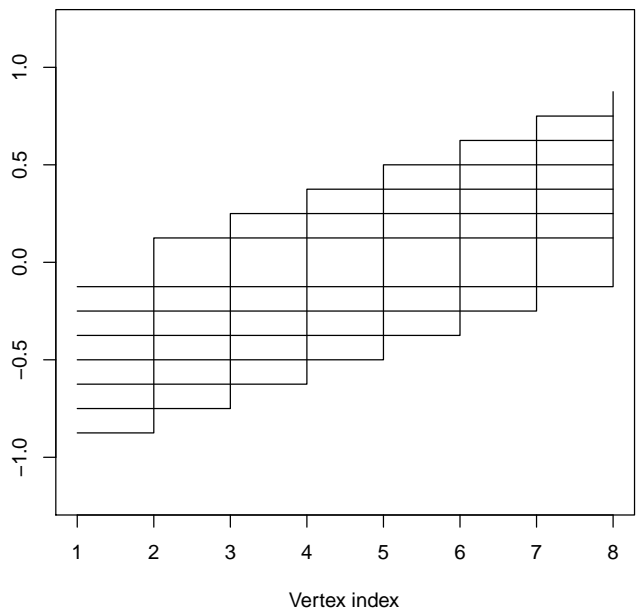}
\caption{Plot of the dictionary atoms whose coefficient are penalized, obtained by the inversion of $B$ for the case when $k=1$. These dictionary atoms correspond to the last $n-1$ columns of $V=B^{-1}$.}
\end{figure}
\end{itemize}

\subsubsection{k=2}
We select $T_i=T_n$.
We want to calculate $V= B^{-1}$. We see that $v_1= +_n'/n$.

To calculate $v_j, j \in \{2, \ldots, n\}$ we can proceed as follows.

We delete the $j^{\text{th}}$ row of $B$. We have to find $v_j$ in $\N(B_{-j})$, s.t. $B_jv_j=1$.

We first look for a vector $u_j$ in $\N(B_{-\{1,j\}})$. We see that
$$ u_j = (n-j+2, \{n-j+2-(i-1)\frac{n-j+1}{j-1}\}_{i=2}^{j-1},1, 2, \ldots, n-j+1)'$$
is such a vector. Note that it contains two segments with constant slope. We now want to normalize $u_j$, s.t. $B_j u^{*}_j= 1$. We have that
$$ B_j u_j= n-j+2-(j-2) \frac{n-j+1}{j-1}= \frac{n}{j-1}.$$

Thus $B_j u^*_j=1$, where 
$$ u^*_j=\frac{j-1}{n} u_j.$$

We now have to subtract from $u^*_j$ its mean $\bar{u}^*_j$, s.t. $u^*_j-\bar{u}^*_j\in \N(B_1)  $ as well. Since $\bar{u}^*_j=\frac{j-1}{n} \bar{u}_j $, we calculate $\bar{u}_j= \frac{\sum_{i=1}^n (u_j)_i}{n}$.

We have that
$$ \sum_{i=1}^n (u_j)_i = \sum_{i=1}^{n-j+2} i 1 \sum_{i=2}^{j-1}\left[(n-j+2) -(i-1) \frac{n-j+1}{j-1} \right].$$

We see that
$$ \sum_{i=1}^{n-j+2} i = \frac{(n-j+3)(n-j+2)}{2}$$
and that
\begin{eqnarray*}
\sum_{i=2}^{j-1} \left[ (n-j+2) -(i-1) \frac{n-j+1}{j-1} \right]&=& (j-2)(n-j+2)- \frac{n-j+1}{j-1}\sum_{i=1}^{j-2} i\\
&=& (j-2)(n-j+2)- \frac{(n-j+1)(j-2)}{2}\\
&=& (j-2) \left(n-j+2-\frac{n-j+1}{2} \right)\\
&=& \frac{(j-2)(n-j+3)}{2}.
\end{eqnarray*}

Thus,
$$ \sum_{i=1}^n= \frac{n(n-j+3)}{2}.$$

It follows that
$$ \bar{u}_j= \frac{n-j+3}{2}.$$

Thus we have that
$$ v_j= u_j^*- \bar{u}_j^*= \frac{j-1}{n} \left(u_j- \frac{n-j+3}{2} \right),$$
is the vector we are looking for.

Note that for $n$ even we have that
$$ \{T^k v_j\}_{\substack{j=2, \ldots, n/2\\ k = 0, 1, \dots, n-1}}= -\{T^k v_j\}_{\substack{j=n/2 +2, \ldots, n\\ k = 0, 1, \dots, n-1}},$$
and that
$$ \{T^k v_{n/2+1}\}_{ k = 0, 1, \dots, n/2-1}= -\{T^k v_{n/2+1}\}_{j=n/2, \ldots, n}.$$
Thus the part of the pruned dictionary whose coefficients are penalized contains $\binom{n}{2}$ atoms.

If $n$ is odd we have that
$$ \{T^k v_j\}_{\substack{j=2, \ldots, (n+1)/2\\ k = 0, 1, \dots, n-1}}= -\{T^k v_j\}_{\substack{j=(n+1)/2 +1, \ldots, n\\ k = 0, 1, \dots, n-1}}.$$

Thus the part of the pruned dictionary whose coefficients are penalized contains $\binom{n}{2}$ atoms.

\begin{itemize}
\item Analysis

The $2^{\text{nd}}$ order discrete graph derivative operator for $\vec{G}$ is
$$ D^2_{\vec{G}}= \begin{pmatrix*}[r]1 & -2 & 1 & & & & & \\
 & 1 & -2 & 1 & & & & \\
 & & 1 & -2 & 1 & & & \\
& & & 1 & -2 & 1 & & \\
 & & & & 1 & -2 & 1 & \\
 & & & & & 1 & -2 & 1\\
 1 & & & & &  & 1 & -2\\
-2 & 1 & & & &  &  & 1\\
 \end{pmatrix*}\in \R^{8\times 8}.$$

\item Synthesis

The following Figure \ref{fc2} represents the plot of the last $(n-1)$ columns of the matrix $V$ obtained as explained above.

\begin{figure}[h]\label{fc2}
\includegraphics{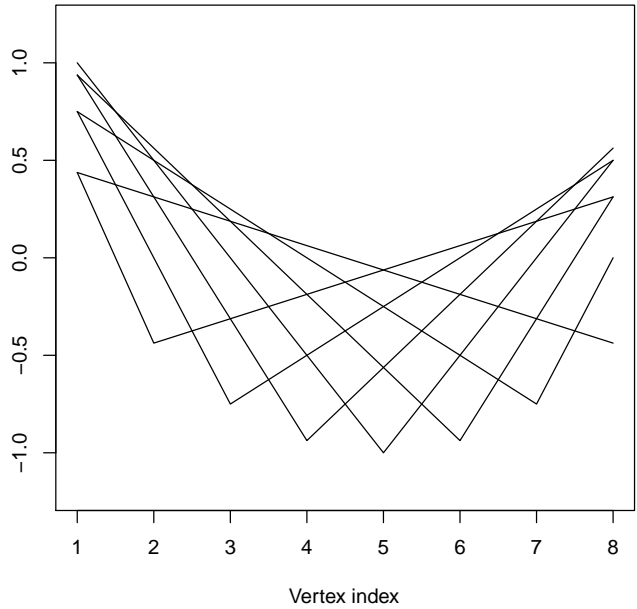}
\caption{Plot of the dictionary atoms whose coefficient are penalized, obtained by the inversion of $B$ for the case when $k=2$. These dictionary atoms correspond to the last $n-1$ columns of $V=B^{-1}$.}
\end{figure}

\end{itemize}

\subsubsection{k=3}

We now consider the set $T_{n-1}$. We consider the map $  j^0 \mapsto j,  \{2, \ldots, n\} \mapsto \{1, \ldots, n-1\}$ defined as
$$ j(  j^0)= \begin{cases}   j^0, &   j^0 \in \{2, \ldots, n-1 \}\\
1, &   j^0=n, \end{cases}$$
and let $  j^0(j)$ denote the inverse map. From now on we write for brevity $j$, when we mean $j(  j^0)$. We see that the vectors in $ \N(B_{-\{1,   j^0 \}})$ consist of a convex and a concave segment. The concave segment starts at vertex $1$ and ends at vertex $j(  j^0)$, while the convex segment starts at $j(  j^0)$ and ends at $1$.
We thus write
$$ u_j= \left(\{(i-1)(j-i)\}_{i=1}^j, -a\{(i-1)(n+1-i) \}_{i=j+1}^n \right),$$
where $a \in \R$ is a parameter needed to harmonize the two pieces.

We now want to find $a$. Since the convex and concave segment are symmetric, it is enough to check the condition only at one side. We have to have that $D_n u_j=0 \forall j \not= 1$.
We have that $$ D_n u_j= (u_j)_3-3 (u_j)_2 + 3 (u_j)_1-(u_j)_n.$$
Note that $(u_j)_1=0, \forall j$ and $(u_j)_n= -a (n-j), \forall j$, and thus the above expression becomes
$$ D_n u_j= (u_j)_3-3 (u_j)_2 + a(n-j).$$
We distinguish three cases:
\begin{itemize}
\item $j=1$;
\item $j=2$;
\item $j \ge 3$.
\end{itemize}

\begin{table}[h]
\begin{tabular}{|l|l|l|l|}
\hline
& $j=1$ & $j=2$ & $j\ge 3$ \\ \hline
   $i=3$ & -2a (n-2)  & -a(n-2) & 2(j-3)  \\ \hline
 $i=2$ & -a(n-1) & 0  & (j-2) \\ \hline
\end{tabular}
\label{tab71}
\caption{Values of $(u_j)_2$ and $(u_j)_3$ for the three cases $j=1$, $j=2$ and $j \ge 3$.}
\end{table}

In Table \ref{tab71} the values of $(u_j)_2$ and $(u_j)_3$ for the three cases are exposed. For these three cases we thus get the following.

\begin{itemize}
\item Case $j=1$. \\
$$ D_n u_1= -2a(n-2)+3a (n-1)+ a(n-1)= 2an.$$

\item Case $j=2$. \\
$$ D_n v_2=0.$$

\item Case $j \ge 3$. \\
$$ D_n v_j = 2 (j-3) - 3 (j-2) + a (n-j)= -j+ a(n-j).$$
Thus from the condition $D_n v_j=0$ we get that
$$ a= \frac{j}{n-j}.$$
\end{itemize}
Now we have to find a normalized version $u^*_j$ of $u_{j}$, s.t. $B_{  j^0} u^*_j=1$. We can see that we need to scale with a factor $\frac{n-j}{2n}$.

We thus have that
$$ u^*_j= \frac{n-j}{2n} \left( \{(i-1)(j-i)\}_{i=1}^j, - \frac{j}{n-j} \{ (i-j) ( n+1-i)  \}_{i=j+1}^n \right)$$
is in $ \N(B_{-  j^0(j)})$ and $B_{  j^0(j)}u_j=1$. It now remains to center the vector $u^*_j$, s.t. it is in $\N(B_1)$ as well.

We are going to need the formula
$$ \sum_{i=1}^k i^2= \frac{(k)(k+1)(2k+1)}{6}.$$

We thus calculate 
$$ \sum_{i=1}^n (u_j)_i= \underbrace{\sum_{i=1}^j(i-1)(j-i)}_{\text{I}}- \frac{j}{n-j}\underbrace{(i-j)(n+1-i)}_{\text{II}}.$$
We have that
\begin{eqnarray*}
\text{I}&=& \sum_{i=1}^j (i-1)(j-i)\\
&=& \sum_{i=1}^j \left[ ij-i^-j+i\right]\\
&=& -j^2 + (j+1)\sum_{i=1}^j i - \sum_{i=1}^j i^2 \\
&=& -j^2 + \frac{(j+1)^2j}{2}- \frac{(2j+1)j(j+1)}{6}\\
&=& \frac{(j-2)(j-1)j}{6}.
\end{eqnarray*}
Moreover,
\begin{eqnarray*}
\text{II}&=& \sum_{i=1}^j(i-j)(n+1-j)\\
&=& \sum_{i=1}^{n-j} i (n+1-j-i)\\
&=& (n+1-j) \sum_{i=1}^{n-j} i - \sum_{i=1}^{n-j} i^2\\
&=& \frac{(n+1-j)^2(n-j)}{2} - \frac{(n-j)(n-j+1)(2n-2j+1)}{6}\\
&=& \frac{(n+1-j)(n-j)(n-j+2)}{6}.
\end{eqnarray*}
It follows that
$$ \sum_{i=1}^n (u_j)_i= -\frac{j}{6} n (n-2j+3).$$
 
Therefore the vectors we are looking for are

$$ v_j= \left(\frac{n-j}{2n}\{(i-1)(j-i)\}_{i=1}^j, -\frac{j}{2n} \{(i-j)(n+1-i)\}_{i=j+1}^n \right) + \frac{j(n-j)(n-2j +3)}{12n}, j \in [n-1],$$
and
$$ (B^{-1})_{  j^0}= v_{j(  j^0)},   j^0\in \{2, \ldots, n\}.$$

We now want to find out how large $p$ is.

By making the changes of variables $j'=n-j$ and $i'=n-i+1$ we obtain that
\begin{itemize}
\item $ \frac{n-j}{2n} \{(i-1)(j-i) \}_{i=1}^j= \frac{j'}{2n} \{ (n-i')(i'-j'-1)\}_{i'=n}^{j'+1}$;

\item $ -\frac{j}{2n} \{ (n-i+1)(i-j)\}_{i=j+1}^{n}=-\frac{n-j'}{2n} \{i'(j'-i'+1) \}_{i'=j'}^1 $;

\item $ \frac{(n-j)j(n-2j+3)}{12n}= \frac{j'(n-j')(-n+2j'+3)}{12n}.$

\end{itemize}

Note that $(n-i')(i'-j'-1)= (i'-j')(n+1-j')-(n-j')$ and that $i'(j'-i'+1)= (i'-1)(j'-i') +j'$. We thus have that

\begin{itemize}
\item $ \frac{n-j}{2n} \{(i-1)(j-i) \}_{i=1}^j= \frac{j'}{2n} \{ (n-i'+1)(i'-j')\}_{i'=n}^{j'+1} - \frac{j'(n-j')}{2n}$;

\item $ -\frac{j}{2n} \{ (n-i+1)(i-j)\}_{i=j+1}^{n}=-\frac{n-j'}{2n} \{(i'-1)(j'-i') \}_{i'=j'}^1  - \frac{j'(n-j')}{2n}$.
\end{itemize}

Note that
$$ \frac{j'(n-j')(-n+2j'+3)}{12n}-\frac{j'(n-j')}{2n}= -\frac{j'(n-j')(n-2j'+3)}{12n}.$$

We thus see that
\begin{eqnarray*}
v_{j'}=v_{n-j}&=& -\left(-\frac{j'}{2n} \{ (n-i'+1)(i'-j')\}_{i'=n}^{j'+1}, \frac{n-j'}{2n} \{(i'-1)(j'-i') \}_{i=j'}^1  \right)\\
&-& \frac{j'(n-j')(n-2j'+3)}{12n}.
\end{eqnarray*}

By selecting $i^*= n-i'+j'+1, i'\in \{n, \ldots, j'+1\}$  and $i^*=j'-i'+1,  i'\in \{j', \ldots, 1\}$ (i.e. by exploiting the symmetry of the convex and the concave segments of $v_{j'}$), we have that

\begin{eqnarray*}
v_{j'}=v_{n-j}&=& -\left(-\frac{j'}{2n} \{ (n-i^*+1)(i^*-j')\}_{i^*=j+1}^{n}, \frac{n-j'}{2n} \{(i^*-1)(j'-i^*) \}_{i^*=1}^{j'}  \right)\\
&-& \frac{j'(n-j')(n-2j'+3)}{12n}\\
&=& -T^{n-j}v_j,
\end{eqnarray*}
 and thus we can apply the same considerations as we did for $k=2$. It follows that $p=\frac{n(n-1)}{2}$ and $n-r=1$ here as well.

\begin{itemize}
\item Analysis

The $3^{\text{rd}}$ order discrete graph derivative operator for $\vec{G}$ is
$$ D^3_{\vec{G}}= \begin{pmatrix*}[r]-1 & 3 & -3 & 1 & & & & \\
 & -1 & 3 & -3 & 1 & & & \\
 & & -1 & 3 & -3 & 1 & & \\
& & & -1 & 3 & -3 & 1 & \\
 & & & & -1 & 3 & -3 & 1\\
 1 & & & &  & -1 & 3 & -3\\
-3 & 1 & & &  &  & -1 & 3\\
3 & -3 & 1 & &  &  & & -1\\
 \end{pmatrix*}\in \R^{8\times 8}.$$
 
\item Synthesis

The following Figure \ref{fc3} represents the plot of the last $(n-1)$ columns of the matrix $V$ obtained as explained above.

\begin{figure}[h]\label{fc3}
\includegraphics{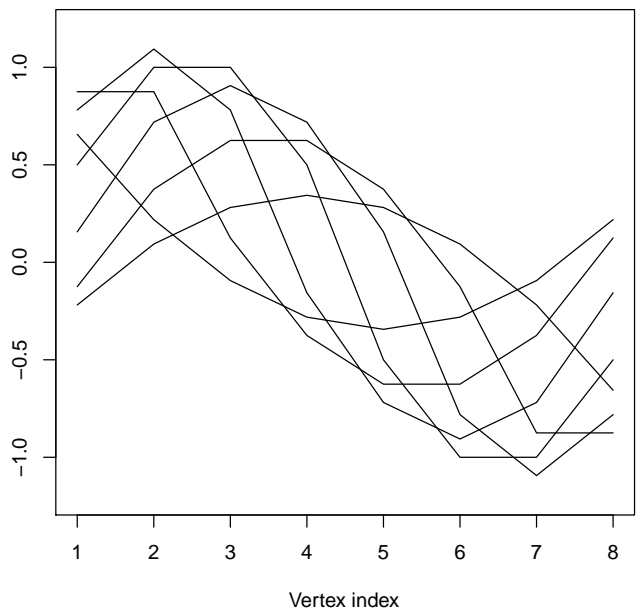}
\caption{Plot of the dictionary atoms whose coefficient are penalized, obtained by the inversion of $B$ for the case when $k=3$. These dictionary atoms correspond to the last $n-1$ columns of $V=B^{-1}$.}
\end{figure}

\end{itemize}

%
%
%
%
%
%
%
%
%
%
%
%
%
%
%
%
%
%
%
%
%
%
%
%
%
%
%

\newpage

\bibliographystyle{imsart-nameyear}

\bibliography{/Users/fortelli/PhD/library}

\end{document}